\newcommand{\stkout}[1]{\ifmmode\text{\sout{\ensuremath{#1}}}\else\sout{#1}\fi}
 \newcommand{\grad}{\triangledown}
\newcommand{\mg}{\mathfrak{g}}
\newcommand{\Rn}{\mathbb{R}^n}
\theoremstyle{plain}
\newtheorem{thm}{Theorem}[section]
\theoremstyle{plain}
\newtheorem{lem}[thm]{Lemma}
\newtheorem{cor}[thm]{Corollary}
\theoremstyle{definition}
\newtheorem{rem}{Remark}[section]
\newtheorem*{maintheorem*}{Main Theorem}
\newtheorem*{maincorollary*}{Main Corollary}
\newcommand{\norm}[1]{\ensuremath{\left\|#1\right\|}}
\newcommand{\cL}{\ensuremath{\mathcal{L}}}
\newcommand{\dist}{\rm dist}
\newcommand{\R}{\ensuremath{\mathbb{R}}}
\newcommand{\loc}{\mathrm{loc}}
\newcommand{\dx}{\ensuremath{\, dx}}
\newcommand{\dy}{\ensuremath{\, dy}}
\newcommand{\dz}{\ensuremath{\, dz}}
\newcommand{\dt}{\ensuremath{\, dt}}
\numberwithin{equation}{section} \allowdisplaybreaks
\title{The Pohozaev identity for mixed local-nonlocal operators}
\begin{document}

\author{Anup Biswas}
\address{Department of mathematics, Indian Institute of Science Education and Research Pune, Dr.\
Homi Bhabha Road, Pune 411045, India}
\thanks{This research is supported by a SwarnaJayanti fellowship SB/SJF/2020-21/03}

\begin{abstract}
In this article we prove the Pohozaev identity for the semilinear Dirichlet problem of the form
$-\Delta u + a(-\Delta)^s u = f(u)$ in $\Omega$, and $u=0$ in $\Omega^c$, where $a$ is a non-negative constant and
$\Omega$ is a bounded $C^2$ domain. We also establish similar identity for systems of equations. As applications of this identity, 
we deduce a unique continuation property of eigenfunctions and also the nonexistence of nontrivial
solutions in star-shaped domains under suitable condition on $f$.
\end{abstract}

\keywords{Nonexistence results, Brezis-Nirenberg problems, systems of equations,  integro-differential operators,  supercritical nonlinearities}
\subjclass[2010]{Primary: 35R11, 35A01 Secondary: 45K05}

\maketitle

\section{Introduction and Results}
For $s\in (0,1)$, we consider a mixed local-nonlocal operator of the form
\begin{equation}\label{E1.1}
\cL u = -\Delta u + a (-\Delta)^s u = f(u) \quad \text{in}\;\Omega, \quad u=0\quad \text{in}\; \Omega^c,
\end{equation}
where $a\geq 0$, $\Omega$ is a bounded $C^2$ domain in $\Rn$ and $(-\Delta)^s$ is the $s$-fractional Laplacian given by
$$(-\Delta)^s u(x)=c_{n, s}\, {\rm PV}\int_{\Rn}\frac{u(x)-u(y)}{|x-y|^{n+2s}}\dy.$$
Here $c_{n,s}$ is a normalizing constant given by
$$c_{n,s}=\frac{s 2^{2s}\Gamma\left(\frac{n+2s}{2}\right)}{\pi^{n/2}\Gamma(1-s)}.$$
When $a=0$, this is the standard local case and it is enough to prescribe the boundary data on $\partial\Omega$. To state our main result we define the following semi-norms 
\begin{align*}
[u]_s&= \left(\frac{c_{n,s}}{2}\int_{\Rn}\int_{\Rn} \frac{|u(x)-u(y)|^2}{|x-y|^{n+2s}}\dx\dy\right)^{\nicefrac{1}{2}},
\\
[u]_1& = \left(\int_\Omega|\grad u(x)|^2 \dx\right)^{\nicefrac{1}{2}}.
\end{align*}
Since we are concerned with bounded solutions and under the Lipschitz regularity of $f$, classical, viscosity and weak solutions coincide (see Remark~\ref{Rem-1.2}),
we state our main result for the classical solution.

\begin{thm}\label{T-main}
Let $\Omega$ be a bounded $C^2$ domain in $\Rn$ and $u\in C^2(\Omega)\cap C(\Rn)$ be a solution to \eqref{E1.1}.
Also, let $f$ be locally Lipschitz. Then $u\in C^{1, \alpha}(\bar\Omega)$ for some $\alpha\in (0, 1)$ and
\begin{equation}\label{ET-01A}
s\, a\,[u]^2_s +  [u]^2_1 - n \mathcal{E}(u) = 
\frac{1}{2}\int_{\partial \Omega} \left(\frac{\partial u}{\partial \nu}\right)^2 (x\cdot \nu(x)) dS,
\end{equation}
where $f(u)=F'(u)$, $\nu$ is the unit outward normal on $\partial\Omega$ and 
$$\mathcal{E}(u)= \frac{a}{2}[u]^2_{s} + \frac{1}{2}[u]^2_{1}- \int_{\Omega} F(u) \dx.$$
\end{thm}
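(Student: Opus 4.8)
The plan is to follow the classical Pohozaev strategy of testing the equation against the scaling vector field $x\cdot\nabla u$, but to handle the local and nonlocal parts separately and to do the computation on an approximating family of domains to avoid boundary regularity issues. First I would establish the regularity claim $u\in C^{1,\alpha}(\bar\Omega)$: since $f$ is locally Lipschitz and $u$ is bounded, $f(u)\in L^\infty(\Omega)$, and one bootstraps using the known interior and boundary Schauder-type estimates for the mixed operator $-\Delta+a(-\Delta)^s$ on a $C^2$ domain (the local term dominates the scaling, so one gets $C^{1,\alpha}$ up to the boundary, and no higher in general because of the nonlocal term). This regularity is exactly what is needed to make the boundary integral $\int_{\partial\Omega}(\partial_\nu u)^2(x\cdot\nu)\,dS$ meaningful and to justify the integrations by parts below.

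Next I would treat the local part. Multiplying $-\Delta u = f(u) - a(-\Delta)^s u$ by $x\cdot\nabla u$ and integrating over $\Omega$, the purely local computation is the textbook one: $\int_\Omega(-\Delta u)(x\cdot\nabla u)\dx = \frac{n-2}{2}[u]_1^2 - \frac12\int_{\partial\Omega}(\partial_\nu u)^2(x\cdot\nu)\,dS$, using $u=0$ on $\partial\Omega$ so that $\nabla u = (\partial_\nu u)\nu$ there. On the right, $\int_\Omega f(u)(x\cdot\nabla u)\dx = \int_\Omega x\cdot\nabla(F(u))\dx = -n\int_\Omega F(u)\dx$ after integrating by parts (again the boundary term vanishes since $u=0$, hence $F(u)=F(0)$ on $\partial\Omega$, and one may normalize $F(0)=0$, or carry the constant and note it cancels). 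Assembling these two gives the local Pohozaev identity with a leftover term $a\int_\Omega(-\Delta)^s u\,(x\cdot\nabla u)\dx$ that must be shown to equal $-(s a)[u]_s^2 + \text{(no boundary contribution)}$, up to the $n$-scaling bookkeeping; the precise target is that $\int_\Omega (-\Delta)^s u\,(x\cdot\nabla u)\dx = \frac{2s-n}{2}[u]_s^2$.

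The nonlocal term is the main obstacle and the heart of the matter. The clean way is a direct scaling argument: for $\lambda>0$ set $u_\lambda(x)=u(\lambda x)$, so that $x\cdot\nabla u = \frac{d}{d\lambda}\big|_{\lambda=1}u_\lambda$, and differentiate the Gagliardo energy $[u_\lambda]_s^2 = \lambda^{2s-n}[u]_s^2$ at $\lambda=1$ to get $2\langle(-\Delta)^s u,\, x\cdot\nabla u\rangle = (2s-n)[u]_s^2$, where the pairing is the bilinear form associated with $[\cdot]_s$ and $x\cdot\nabla u$ is extended by its own formula on all of $\R^n$ (note $u\equiv0$ outside $\Omega$ forces no compatibility problem, but $x\cdot\nabla u$ is generally nonzero on $\Omega^c$ near $\partial\Omega$, which is the delicate point). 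To make this rigorous without differentiating under an ill-behaved integral, I would instead adapt the Ros-Oton–Serra truncation: integrate $(-\Delta)^s u\,(x\cdot\nabla u)$ over $\Omega$, symmetrize the double integral, exploit the homogeneity of the kernel $|x-y|^{-n-2s}$ under the dilation $(x,y)\mapsto(\lambda x,\lambda y)$, and carefully track the boundary layer where $x\cdot\nabla u$ fails to vanish, showing that the would-be boundary term for the nonlocal part is zero precisely because $u\in C^{1,\alpha}(\bar\Omega)$ and $u=0$ in $\Omega^c$ (the key cancellation in Ros-Oton–Serra, where the analogous term involves $(u/d^s)^2$, here simplifies because the local $C^1$ behaviour gives $u\sim d$, and $\int d^{2-?}\cdots$ converges with no residue). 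Combining the local identity, the $f$-term, and $a$ times the nonlocal computation, the $\frac{n-2}{2}$, $\frac{2s-n}{2}$, and $-n$ coefficients reorganize exactly into $sa[u]_s^2 + [u]_1^2 - n\mathcal E(u)$ on the left and $\frac12\int_{\partial\Omega}(\partial_\nu u)^2(x\cdot\nu)\,dS$ on the right, which is \eqref{ET-01A}. I expect the truncation/limiting argument controlling the nonlocal boundary layer to be the only genuinely technical step; everything else is bookkeeping of scaling exponents.
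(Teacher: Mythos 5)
Your plan is conceptually on the same track as the paper's proof: both test against the vector field $x\cdot\nabla u$, both split the operator into its local and nonlocal parts, both reduce the nonlocal term to the target $\int_\Omega(-\Delta)^s u\,(x\cdot\nabla u)\,\dx=\frac{2s-n}{2}[u]_s^2$ via the dilation $u_\lambda(x)=u(\lambda x)$, and both rely on the crucial observation that because $u\sim \dist(\cdot,\partial\Omega)$ (rather than $\dist^s$), no boundary contribution survives from the nonlocal part. The local part and the $f$-term bookkeeping are handled correctly and match the paper.

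There are, however, two genuine gaps. First, the scaling computation with $u_\lambda$ only makes sense if $\Omega$ is star-shaped with respect to the origin: you need $\supp(u_\lambda)=\frac{1}{\lambda}\Omega\subset\Omega$ for $\lambda>1$ to write $\int_\Omega u_\lambda(-\Delta)^s u$ and differentiate it through the dilation. For a general bounded $C^2$ domain this fails, and the paper spends the entire proof of Theorem~\ref{T-main} on the reduction: cover $\bar\Omega$ by balls $B_1,\dots,B_m$ with an associated partition of unity $\{\psi_k\}$, expand bilinearly in the pieces $u_k=\psi_k u$, and for each pair $(i,j)$ either use disjointness of supports (combined with \cite[Lemma~5.1]{RS14}) or select a $C^2$ domain $\tilde\Omega\supset\supp u_i\cup\supp u_j$ that is strictly star-shaped, apply the star-shaped identity (Theorem~\ref{T-06}) to $(\psi_i\pm\psi_j)u$ and polarize. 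Your proposal mentions an ``approximating family of domains,'' but that addresses the interior singularity of $D^2u$ near $\partial\Omega$, not the star-shapedness obstruction; without the partition-of-unity reduction the argument only proves the identity in strictly star-shaped domains.

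Second, you wave at ``the truncation/limiting argument'' as the one technical step, but the actual mechanism in the paper is different and requires a specific regularity lemma. The paper factorizes the pairing as $\int_\Omega u_\lambda(-\Delta)^s u=\lambda^{(2s-n)/2}\int_{\Rn}\mg_{\sqrt\lambda}\,\mg_{1/\sqrt\lambda}$ with $\mg=(-\Delta)^{s/2}u$, and then shows $\frac{d}{d\lambda}\big|_{\lambda=1+}\int_{\Rn}\mg_\lambda\mg_{1/\lambda}=0$. Justifying that derivative needs precise H\"older/$C^{1,\beta}$ control of $\mg$ in the weighted scale $\norm{\mg}_{C^\beta(\{\dist\ge\varrho\})}\lesssim\varrho^{1-s-\beta}$ together with $\mg\in C^{1-s}(\Rn)$ globally (Lemma~\ref{L05}), which in turn rests on the boundary bounds $|D^2u|\lesssim\dist^{\alpha-1}$ and $|(-\Delta)^su|\lesssim 1+\dist^{1-2s}$ from Theorem~\ref{T-03}. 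None of these quantitative estimates appear in your outline, and they are exactly what is needed both to differentiate under the integral and to verify the claimed cancellation of the nonlocal boundary term. Absent them, the step you call ``the only genuinely technical step'' is not actually established.
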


\begin{rem}
Since $u\in C^{0,1}(\Rn)$ and $u=0$ in $\Omega^c$, we get $u\in H^s(\Rn)$. Therefore, the following 
identity holds
$$\int_\Omega u (-\Delta)^s u \dx = \int_{\Rn} (-\Delta)^{\nicefrac{s}{2}}u (-\Delta)^{\nicefrac{s}{2}}u \dx=[u]^2_s.$$
Hence \eqref{ET-01A} can be written in the following equivalent form 
$$ (s-1)\, a\,[u]^2_s + \frac{(2-n)}{2}\int_\Omega u f(u)\dx + n \int_{\Omega} F(u) \dx=\frac{1}{2}\int_{\partial \Omega} \left(\frac{\partial u}{\partial \nu}\right)^2 (x\cdot \nu(x)) dS.$$
\end{rem}

\begin{rem}\label{Rem-1.2}
Theorem~\ref{T-main} holds if either $u$ is viscosity solution in $C(\Rn)$ or a weak solution in the space
$$\mathbb{X}(\Omega)\cap L^n(\Omega)\quad \text{where}\quad \mathbb{X}(\Omega):=\{v\in {H}^1(\Rn)\; :\; v=0\quad \text{a.e.\ in $\Omega^c$}\}.$$
More precisely, if $u$ is a viscosity solution, then by \cite{BMS22}, $u\in C^{0, 1}(\Rn)\cap C^1(\bar\Omega)$. Since $f$ is locally Lipschitz, we have the map $x\mapsto f(u(x))$ Lipschitz in $\Omega$. Therefore, by \cite[Theorem~5.3]{MZ21}, $u\in C^2(\Omega)$. Hence $u$ is a classical solution and Theorem~\ref{T-main} applies.

For $a=0$, any weak solution $u\in H^1_0(\Omega)$ is in $W^{2, p}(\Omega)$ \cite[Theorem~9.15]{GilTru} for any $p>1$, and therefore, $\grad u$
is H\"{o}lder continuous in $\bar\Omega$. Now from the standard theory of elliptic PDE it follows that $u\in C^{2}(\Omega)$, implying $u$ is a
classical solution.
On the other hand, for $a>0$, if $u$ is a weak solution in $\mathbb{X}(\Omega)\cap L^n(\Omega)$, by \cite[Theorem~1]{DM24}, $u\in C^{\alpha}(\Rn)$ for all $\alpha\in (0,1)$. Let $\{u_\varepsilon\}$ be a smooth sequence of mollification of $u$. Define $g_\varepsilon=f(u_\varepsilon)$.
Let $w_\varepsilon\in C^2(\Omega)$ be the unique solution to
$$\cL w_\varepsilon = g_\varepsilon \quad \text{in}\; \Omega, \quad w_\varepsilon=0\quad \text{in}\; \Omega^c.$$
By \cite{BMS22}, we get $w_\varepsilon\in C^{1,\alpha}(\bar\Omega)$ for some $\alpha\in(0, 1)$ and from Theorem~\ref{T-03} below we also obtain 
the bound 
$$|D^2 w_\varepsilon|\leq C\, {\rm dist}(x, \partial\Omega)^{\alpha-1}\quad \text{for}\; x\in\Omega,$$
for some $C>0$ and $\alpha\in (0, 1)$ that could depend on $\varepsilon$. This bound allows us to perform integration by parts, proving that $w_\varepsilon$ is also a weak solution. 
Now using the stability of weak and viscosity solutions together with the fact that $w_\varepsilon\to u$ uniformly in $\Rn$,
we conclude that $u$ is also a viscosity solution. Now we apply previous argument to obtain that $u\in C^2(\Omega)$.
\end{rem}

The proof of Theorem~\ref{T-main} also extends to the systems of equations, giving us the following Pohozaev identity.
\begin{thm}\label{T-09}
Let $\Omega$ be bounded $C^2$ domain and $u, v\in C^2(\Omega)\cap C(\Rn)$ solve
\begin{align*}
-\Delta u + a_1 (-\Delta)^{s_1} u &= F_u(u, v)\quad \text{in}\; \Omega,\quad u=0\quad \text{in}\; \Omega^c,
\\
-\Delta v + a_2 (-\Delta)^{s_2} v  &=F_v(u, v)\quad \text{in}\; \Omega,\quad v=0\quad \text{in}\; \Omega^c,
\end{align*}
where $a_1, a_2\geq 0$, $s_1, s_2\in (0,1)$, $F$ is in $C^{1,1}_{\loc}(\R^2)$  satisfying $F(0,0)=0$.
Then
\begin{align}\label{ET-09A}
& s_1 a_1\, [u]^2_{s_1} +  [u]^2_1 + s_2 a_2\, [v]^2_{s_2} +  [v]^2_1 - n \mathcal{E}(u,v) \nonumber
\\
& \quad = \frac{1}{2}\int_{\partial \Omega} \left(\frac{\partial u}{\partial \nu}\right)^2 (x\cdot \nu(x)) dS + \frac{1}{2}\int_{\partial \Omega} \left(\frac{\partial v}{\partial \nu}\right)^2 (x\cdot \nu(x)) dS,
\end{align}
where
$$\mathcal{E}(u,v)= \frac{a_1}{2}[u]^2_{s_1} + \frac{1}{2}[u]^2_1
+ \frac{a_2}{2}[v]^2_{s_2} + \frac{1}{2}[v]^2_1-\int_{\Omega} F(u,v) \dx.$$
\end{thm}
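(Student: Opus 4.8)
The plan is to reduce Theorem~\ref{T-09} to a system-analogue of the argument that establishes Theorem~\ref{T-main}, running the Pohozaev computation separately on each equation and then adding. First I would record the regularity input: since $u,v\in C^2(\Omega)\cap C(\Rn)$ solve the two equations and $F\in C^{1,1}_{\loc}(\R^2)$, the maps $x\mapsto F_u(u(x),v(x))$ and $x\mapsto F_v(u(x),v(x))$ are locally Lipschitz on $\Omega$, so exactly as in Remark~\ref{Rem-1.2} (via \cite{BMS22} and Theorem~\ref{T-03}) we get $u,v\in C^{1,\alpha}(\bar\Omega)$ together with the Hopf-type second-derivative bound $|D^2 u|,|D^2 v|\le C\,\mathrm{dist}(x,\partial\Omega)^{\alpha-1}$. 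This regularity is what legitimizes all the integrations by parts below and guarantees that the boundary integrals of $(\partial_\nu u)^2 (x\cdot\nu)$ and $(\partial_\nu v)^2(x\cdot\nu)$ are finite.

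The core is the rescaled-testing identity. For $\lambda>1$ set $u_\lambda(x)=u(\lambda x)$, $v_\lambda(x)=v(\lambda x)$, and test the $u$-equation against $x\cdot\nabla u$ and the $v$-equation against $x\cdot\nabla v$; equivalently, differentiate at $\lambda=1$ the energy evaluated on the rescaled pair. Each equation contributes three terms. For the local Laplacian term, the classical computation gives
\begin{equation*}
\int_\Omega (-\Delta u)\,(x\cdot\nabla u)\dx = \frac{2-n}{2}[u]_1^2 - \frac12\int_{\partial\Omega}\left(\frac{\partial u}{\partial\nu}\right)^2 (x\cdot\nu)\,dS,
\end{equation*}
using $u=0$ on $\partial\Omega$ so that $\nabla u = (\partial_\nu u)\nu$ there. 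For the nonlocal term $a_1(-\Delta)^{s_1}u$, I would invoke precisely the fractional Pohozaev computation already carried out in the proof of Theorem~\ref{T-main} (the $a>0$ part), which yields
\begin{equation*}
a_1\int_\Omega (-\Delta)^{s_1}u\,(x\cdot\nabla u)\dx = \frac{2s_1-n}{2}\,a_1[u]_{s_1}^2 - \text{(boundary term that vanishes)},
\end{equation*}
the key point being that the would-be fractional boundary contribution, which in the pure fractional Pohozaev identity of Ros-Oton–Serra involves $(u/d^{s_1})^2$, is absent here because the dominant local Laplacian forces $u\in C^{1,\alpha}(\bar\Omega)$, hence $u/d^{s_1}\to 0$ at $\partial\Omega$. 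Finally, for the right-hand side, $\int_\Omega F_u(u,v)(x\cdot\nabla u)\dx + \int_\Omega F_v(u,v)(x\cdot\nabla v)\dx = \int_\Omega x\cdot\nabla\big(F(u,v)\big)\dx = -n\int_\Omega F(u,v)\dx$, after integrating by parts and using $F(0,0)=0$ with $u=v=0$ on $\partial\Omega$ to kill the boundary term. Here $C^{1,1}_{\loc}$ regularity of $F$ plus $u,v\in C^1(\bar\Omega)$ makes $F(u,v)\in C^1(\bar\Omega)$, so this step is clean.

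Adding the two equations' contributions and collecting terms gives
\begin{equation*}
\frac{2-n}{2}\big([u]_1^2+[v]_1^2\big) + \frac{2s_1-n}{2}a_1[u]_{s_1}^2 + \frac{2s_2-n}{2}a_2[v]_{s_2}^2 + n\int_\Omega F(u,v)\dx = -\frac12\int_{\partial\Omega}\Big(\big(\tfrac{\partial u}{\partial\nu}\big)^2 + \big(\tfrac{\partial v}{\partial\nu}\big)^2\Big)(x\cdot\nu)\,dS,
\end{equation*}
which rearranges, using the definition of $\mathcal{E}(u,v)$, into \eqref{ET-09A}; one checks the bookkeeping: $s_1a_1[u]_{s_1}^2+[u]_1^2 - n\big(\frac{a_1}{2}[u]_{s_1}^2+\frac12[u]_1^2\big) = \frac{2s_1-n}{2}a_1[u]_{s_1}^2 + \frac{2-n}{2}[u]_1^2$, and similarly for $v$, while $-n\mathcal{E}(u,v)$ supplies the $+n\int_\Omega F$. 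I expect the main obstacle to be the nonlocal boundary term: one must rigorously justify that the fractional Pohozaev boundary contribution for each $(-\Delta)^{s_i}$ vanishes, which requires the $C^{1,\alpha}(\bar\Omega)$ estimate and a careful limiting argument (truncating the singular integral, integrating by parts on $\Omega_\varepsilon=\{d(x)>\varepsilon\}$, and sending $\varepsilon\to0$ using $|D^2u|\lesssim d^{\alpha-1}$ and $u/d^{s_i}\to 0$) — but this is exactly the argument already established in the proof of Theorem~\ref{T-main}, applied verbatim to each component with its own exponent $s_i$ and coefficient $a_i$, since the two equations are only coupled through the zeroth-order right-hand side and decouple entirely in every integration by parts.
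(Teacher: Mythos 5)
Your proposal follows exactly the paper's route: multiply the $u$-equation by $x\cdot\nabla u$, the $v$-equation by $x\cdot\nabla v$, integrate, apply the Pohozaev identities \eqref{ET06B}--\eqref{ET06C} componentwise (each decouples since the coupling sits only in the right-hand side), and use $\int_\Omega\big[(x\cdot\nabla u)F_u + (x\cdot\nabla v)F_v\big]\dx = \int_\Omega x\cdot\nabla F(u,v)\dx = -n\int_\Omega F(u,v)\dx$. One small slip: in your combined displayed identity the right-hand side should read $+\tfrac12\int_{\partial\Omega}\bigl[(\partial_\nu u)^2 + (\partial_\nu v)^2\bigr](x\cdot\nu)\,dS$ rather than with a leading minus sign; your own bookkeeping check at the end implicitly uses the correct sign, so this is only a typo, not a flaw in the argument.
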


For $a=0$, \eqref{ET-01A} reduces to the classical Pohozaev identity which was first proved by Pohozaev in his seminal work \cite{Poh}. Since then, Pohozaev identity has played a 
key role in the development of  nonlinear partial differential equation, particularly in the context of nonexistence of nontrivial solutions. Recently, Ros-Oton and Serra established the Pohozaev identity for the fractional Laplace operator in  \cite{RS14}, see also \cite{RSV} for extension to 
the anisotropic integro-differential operators of order $2s$.
 Unlike the standard Laplace operator, the solution corresponding to the fractional Laplacian with a Dirichlet boundary condition behaves like $(\dist(\cdot, \partial\Omega))^s$
near the boundary (see \cite{RS14a}). Consequently, the term $\frac{\partial u}{\partial\nu}$ on the right-hand side of \eqref{ET-01A} gets replaced by 
$\frac{ u}{(\dist(\cdot, \partial\Omega))^s}$ in the case of fractional Laplacian.
Very recently, there has been significant interest in mixed local-nonlocal operators to better understand the interplay between local and nonlocal components \cite{AC21,BDVV,BDVV2,BDVV3,BDVV4,BMS22,BM24,DM24,GK22,MS23,MZ21}.
For the problem \eqref{E1.1}, it has been shown that $u\in C^{1}(\bar\Omega)$ and $\frac{ u}{\dist(\cdot, \partial\Omega)}$ is H\"{o}lder continuous in $\bar\Omega$ \cite{BDVV3,BMS22}. Similar result 
has also been established for a class of nonlinear Pucci type integro-differential operators \cite{MS23}. Let us also mention the recent work \cite{GCAG} which establishes Pohozaev-type identity in the whole space $\Rn$ for quasilinear local-nonlocal operators.

It is important to note that the non-existence of solutions for \eqref{ET-01A} when $f(u)$ is super-critical can be analyzed without the  Pohozaev identity. More precisely,  Ros-Oton \& Serra 
demonstrate in \cite{RS15} that if $f$ satisfies certain variational inequality, which is in the spirit of Pucci \& Serrin \cite{PS86}, then mixed local-nonlocal operator can not possess a non-trivial solution. To comment on the proof of
Theorem~\ref{T-main}, we broadly follow the approach of \cite{RS14}. Note that since  the terms
 $\Delta u$ and $(-\Delta)^s u$ are not necessarily bounded in $\bar\Omega$, we can not apply \cite[Proposition~1.6]{RS14} or a straightforward integration by parts. In Theorem~\ref{T-03} we 
 establish the behaviour of these  terms near the boundary which allow us to adapt the proof of \cite{RS14}. Also, to incorporate the
approach of \cite{RS14}, we must estimate the regularity of $\mg=(-\Delta)^{\nicefrac{s}{2}} u$ in $\Rn$ (see Lemma~\ref{L05}). Since $u$ is Lipschitz in $\Rn$, thanks to \cite{BMS22}, $\mg$ exhibits a better 
regularity than the corresponding term in \cite{RS14}. This is why we do not observe any contribution from the nonlocal operator in the boundary integration.

Though there are several well known applications of Pohozaev identity,  we restrict our attention to some applications of more obvious interest and are not covered by \cite{RS15}.
The first one is a unique continuation property which can be traced backed to \cite{Poh70,Uh76} in context of elliptic operators and can also be found in \cite{RS15a} for the fractional Laplacian operator.
\begin{cor}\label{C1.3}
Let $\Omega$ be a bounded $C^2$ domain and $\phi\in C^2(\Omega)\cap C(\Rn)$ solve
\begin{align*}
\cL \phi &= \lambda\phi \quad \text{in}\; \Omega,
\\
\phi & = 0 \quad \text{in}\; \Omega^c,
\end{align*}
for some $\lambda\in \R$. Then
$$\frac{\partial \phi}{\partial \nu}=0\quad \text{on}\;\; \partial\Omega\; \Rightarrow \phi\equiv 0\quad \text{in}\; \Rn.$$
\end{cor}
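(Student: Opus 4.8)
The plan is to feed the equation $\cL\phi=\lambda\phi$ into the Pohozaev identity of Theorem~\ref{T-main} and combine the result with the energy identity obtained by multiplying the equation by $\phi$. Here $f(t)=\lambda t$, so one may take $F(t)=\tfrac{\lambda}{2}t^2$ and hence $\int_\Omega F(\phi)\dx=\tfrac{\lambda}{2}\int_\Omega\phi^2\dx$. By Theorem~\ref{T-main} we have $\phi\in C^{1,\alpha}(\bar\Omega)$, and the hypothesis $\frac{\partial\phi}{\partial\nu}\equiv0$ on $\partial\Omega$ makes the boundary integral in \eqref{ET-01A} vanish. Unwinding the definition of $\mathcal{E}$, the identity \eqref{ET-01A} then reads
\begin{equation}\label{ET-C1}
\Bigl(s-\tfrac{n}{2}\Bigr)a\,[\phi]_s^2+\Bigl(1-\tfrac{n}{2}\Bigr)[\phi]_1^2+\frac{n\lambda}{2}\int_\Omega\phi^2\dx=0.
\end{equation}

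Next I would record the energy identity. Since $\phi$ is Lipschitz, $\phi=0$ in $\Omega^c$, and $\phi\in C^{1,\alpha}(\bar\Omega)$, the distributional gradient of $\phi$ on $\Rn$ is $\grad\phi$ extended by zero outside $\Omega$, so $\phi\in H^1(\Rn)$; the membership $\phi\in H^s(\Rn)$ was already observed in Remark~\ref{Rem-1.2}. Multiplying $\cL\phi=\lambda\phi$ by $\phi$, integrating over $\Omega$, integrating by parts in the Laplacian term (the boundary term vanishes because $\phi=0$ on $\partial\Omega$), and using $\int_\Omega\phi(-\Delta)^s\phi\dx=[\phi]_s^2$ from Remark~\ref{Rem-1.2}, one obtains
\begin{equation}\label{ET-C2}
[\phi]_1^2+a\,[\phi]_s^2=\lambda\int_\Omega\phi^2\dx.
\end{equation}

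Substituting \eqref{ET-C2} into \eqref{ET-C1} to eliminate the term $\lambda\int_\Omega\phi^2\dx$, the coefficients depending on $n$ cancel exactly and we are left with
$$s\,a\,[\phi]_s^2+[\phi]_1^2=0.$$
Because $s>0$, $a\ge0$, and both semi-norms are nonnegative, this forces $[\phi]_1=0$; since $\phi\in C^2(\Omega)$ this means $\grad\phi\equiv0$ in $\Omega$, so $\phi$ is constant on each connected component of $\Omega$. As $\phi\in C(\Rn)$ and $\phi=0$ on $\partial\Omega$, that constant is $0$, whence $\phi\equiv0$ in $\Omega$ and therefore in all of $\Rn$.

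The analytic content is entirely contained in Theorem~\ref{T-main}; the only points above needing a word of justification are the passage to the energy identity \eqref{ET-C2} (which rests on the $H^1\cap H^s$ regularity guaranteed by Theorem~\ref{T-main} and the identity in Remark~\ref{Rem-1.2}) and the deduction of $\phi\equiv0$ from $\grad\phi\equiv0$ together with the vanishing boundary values. Neither presents any real difficulty, so I expect no genuine obstacle beyond correctly tracking the $n$-dependent constants when combining \eqref{ET-C1} and \eqref{ET-C2}.
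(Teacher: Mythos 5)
Your proposal is correct and takes essentially the same route as the paper: combine the Pohozaev identity \eqref{ET-01A} (with the boundary term killed by $\partial\phi/\partial\nu\equiv0$) with the energy identity $[\phi]_1^2+a[\phi]_s^2=\lambda\int_\Omega\phi^2\dx$ obtained by testing the equation against $\phi$, cancel the $n$-dependent terms, and conclude $s\,a\,[\phi]_s^2+[\phi]_1^2=0$, hence $\phi\equiv0$. Your bookkeeping is in fact cleaner than the paper's (which has a sign slip and a missing factor $a$ in the intermediate energy identity, though the final cancellation is the same).
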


Next one is an application of Theorem~\ref{T-09}. For $p$-Laplacian operators similar result can be found in \cite{BM03}.
\begin{cor}\label{C1.4}
Let $\Omega$ be a bounded, star-shaped $C^2$ domain  and $n\geq 3$. Let $u, v\in C^2(\Omega)\cap C(\Rn)$ solve the following system of equations.
\begin{equation}\label{ET1.2A0}
\begin{split}
-\Delta u + a_1 (-\Delta)^{s_1} u & = \lambda_1 |u|^{p-2}u + \delta \alpha |u|^{\alpha-2} u |v|^\beta\quad \text{in}\; \Omega,
\\
-\Delta v + a_2 (-\Delta)^{s_2} v & = \lambda_2 |v|^{q-2}v + \delta \beta |u|^{\alpha}  |v|^{\beta-2} v \quad \text{in}\; \Omega,
\\
u=0, v&=0\quad \text{in}\; \Omega^c,
\end{split}
\end{equation}
where $\alpha, \beta, p, q>1$ and $\lambda_1, \lambda_2, \delta\in\R$. Assume that, for $2^*=\frac{2n}{n-2}$, 
\begin{equation}\label{ET1.2A}
\delta[(\alpha+\beta)-2^*]\geq 0, \quad \lambda_1(p-2^*)\geq 0, \quad \text{and}\quad  \lambda_2(q-2^*)\geq 0.
\end{equation}
If the last two inequalities in \eqref{ET1.2A} are strict, then $u\equiv 0\equiv v$ in $\Rn$. Furthermore, under \eqref{ET1.2A}, 
neither of $u, v$ can be positive in $\Omega$.
\end{cor}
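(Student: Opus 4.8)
The plan is to apply the Pohozaev identity of Theorem~\ref{T-09} to the system \eqref{ET1.2A0} and combine it with the two elementary ``energy identities'' obtained by testing each equation against its own solution. Put
$$F(u,v)=\frac{\lambda_1}{p}\,|u|^p+\frac{\lambda_2}{q}\,|v|^q+\delta\,|u|^\alpha|v|^\beta,$$
so that $F(0,0)=0$, the partial derivatives $F_u,F_v$ are precisely the right-hand sides in \eqref{ET1.2A0}, and $F\in C^{1,1}_{\loc}(\R^2)$ (this is immediate when $\alpha,\beta,p,q\ge 2$, which is the case of interest; for exponents in $(1,2)$ one runs the argument for a regularized nonlinearity and passes to the limit). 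Thus Theorem~\ref{T-09} applies; moreover, exactly as in Theorem~\ref{T-main}, the solutions satisfy $u,v\in C^{1,\alpha}(\bar\Omega)$, and since they are Lipschitz on $\Rn$ and vanish on $\Omega^c$ they belong to $H^1(\Rn)\cap H^{s_i}(\Rn)$, which legitimizes every integration by parts below. We may assume $\Omega$ is star-shaped with respect to the origin, so that $x\cdot\nu(x)\ge 0$ for all $x\in\partial\Omega$.

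Write $I_p=\int_\Omega|u|^p\dx$, $I_q=\int_\Omega|v|^q\dx$ and $I_{\alpha\beta}=\int_\Omega|u|^\alpha|v|^\beta\dx$. Multiplying the first equation by $u$ and the second by $v$, integrating over $\Omega$ and using $\int_\Omega u(-\Delta)^{s_1}u\dx=[u]^2_{s_1}$ (and likewise for $v$, since $u,v\in H^{s_i}(\Rn)$ vanish on $\Omega^c$), one gets the energy identities
$$[u]^2_1+a_1[u]^2_{s_1}=\lambda_1 I_p+\delta\alpha\,I_{\alpha\beta},\qquad [v]^2_1+a_2[v]^2_{s_2}=\lambda_2 I_q+\delta\beta\,I_{\alpha\beta},$$
while $\int_\Omega F(u,v)\dx=\frac{\lambda_1}{p}I_p+\frac{\lambda_2}{q}I_q+\delta\,I_{\alpha\beta}$. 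Inserting these into \eqref{ET-09A}, using $s_ia_i[w]^2_{s_i}+[w]^2_1=\bigl([w]^2_1+a_i[w]^2_{s_i}\bigr)-(1-s_i)a_i[w]^2_{s_i}$ for $w\in\{u,v\}$, and simplifying with $2^*=\frac{2n}{n-2}$ (so $\frac{n-2}{2}=\frac{n}{2^*}$ and $1-\frac n2+\frac np=\frac{n(2^*-p)}{p\,2^*}$), the identity \eqref{ET-09A} reduces to
\begin{align*}
&\frac{n(2^*-p)}{p\,2^*}\,\lambda_1 I_p+\frac{n(2^*-q)}{q\,2^*}\,\lambda_2 I_q+\frac{n-2}{2}\bigl(2^*-(\alpha+\beta)\bigr)\delta\,I_{\alpha\beta}-(1-s_1)a_1[u]^2_{s_1}-(1-s_2)a_2[v]^2_{s_2}\\
&\qquad=\frac12\int_{\partial\Omega}\left(\frac{\partial u}{\partial\nu}\right)^2(x\cdot\nu(x))\,dS+\frac12\int_{\partial\Omega}\left(\frac{\partial v}{\partial\nu}\right)^2(x\cdot\nu(x))\,dS.
\end{align*}

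Under \eqref{ET1.2A} every term on the left is $\le 0$: the first two because $\lambda_1(p-2^*)\ge 0$, $\lambda_2(q-2^*)\ge 0$ and $I_p,I_q\ge 0$; the third because $\delta[(\alpha+\beta)-2^*]\ge 0$ and $\frac{n-2}{2}>0$ ($n\ge 3$); and the last two because $a_i\ge 0$, $s_i\in(0,1)$. Since $\Omega$ is star-shaped the right-hand side is $\ge 0$. Hence both sides vanish and each summand on the left is $0$. If, in addition, $\lambda_1(p-2^*)>0$, the coefficient of $I_p$ is strictly negative, so $I_p=0$ and therefore $u\equiv 0$ in $\Omega$, hence (as $u=0$ on $\Omega^c$) in all of $\Rn$; similarly $\lambda_2(q-2^*)>0$ forces $v\equiv 0$. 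This proves the first claim. For the second, assume only \eqref{ET1.2A} and suppose $u>0$ in $\Omega$. The vanishing of the left-hand side in particular gives $\int_{\partial\Omega}(\partial u/\partial\nu)^2(x\cdot\nu(x))\,dS=0$. But $u\in C^1(\bar\Omega)$ is positive in $\Omega$ and vanishes on $\partial\Omega$, and writing the first equation as $\cL u=c(x)u$ with $c\in L^\infty(\Omega)$, the Hopf boundary lemma for $\cL$ (available for this mixed operator, see e.g.\ \cite{BDVV,BMS22}) yields $\partial u/\partial\nu<0$ on all of $\partial\Omega$. Then $\int_{\partial\Omega}(\partial u/\partial\nu)^2(x\cdot\nu(x))\,dS=0$ forces $x\cdot\nu(x)\equiv 0$ on $\partial\Omega$, contradicting $\int_{\partial\Omega}x\cdot\nu(x)\,dS=n|\Omega|>0$. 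The same argument excludes $v>0$ in $\Omega$.

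The only genuinely nontrivial external input is Theorem~\ref{T-09} (together with the Hopf lemma / strong maximum principle for $\cL$, which is standard for mixed local--nonlocal operators). The step I expect to be most delicate, purely computationally, is the reduction of \eqref{ET-09A} in the second paragraph: one must keep careful track of how the fractional semi-norms $[u]^2_{s_i}$, the two testing identities, and the term $n\mathcal{E}(u,v)$ recombine into the sign-definite expression displayed above — it is precisely the emergence of $2^*$ in the coefficients of $I_p,I_q$ and of $\frac{n-2}{2}$ in front of $I_{\alpha\beta}$ that makes \eqref{ET1.2A} exactly the right hypothesis.
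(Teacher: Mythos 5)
Your proof is correct and follows essentially the same route as the paper: insert the two ``multiply by $u$'' and ``multiply by $v$'' energy identities into the Pohozaev identity of Theorem~\ref{T-09}, reorganize so that the coefficients of $\int|u|^p$, $\int|v|^q$, $\int|u|^\alpha|v|^\beta$ become sign-definite under \eqref{ET1.2A}, and then use Hopf's lemma to rule out positive solutions. The only cosmetic difference is that you move the $(1-s_i)a_i[\,\cdot\,]^2_{s_i}$ terms to the left-hand side whereas the paper keeps them on the right; the sign bookkeeping and conclusions are identical.
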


Next application considers Brezis-Nirenberg type problem given by
\begin{equation}\label{BN}
\begin{split}
\cL u & = |u|^{2^*-2} u + \lambda u \quad \text{in}\; \Omega,
\\
u&=0 \quad \text{in}\; \Omega^c.
\end{split}
\end{equation}
In \cite{BDVV4} the above problems are considered and the existence, nonexistence of solutions have been studied. In particular, 
it is shown that for $\lambda\leq 0$ the above problem does not have any non-trivial solution in a bounded star-shaped domain. On the other hand, there exists $\lambda^*\in [\lambda_{1,s}, \lambda_1)$ such that 
\eqref{BN} admits a positive solution for all $\lambda\in (\lambda^*, \lambda_1)$. Here $\lambda_{1,s}$ and $\lambda_1$ denote the Dirichlet principal eigenvalue of $(-\Delta)^s$ and $-\Delta+(-\Delta)^s$ in $\Omega$, respectively. 
Recall that
\begin{align}
\lambda_{1,s}&=\inf\left\{[u]^2_{\dot{H}^s}\; :\; \norm{u}_{L^2(\Rn)} =1\quad \text{and}\quad u=0\quad \text{in}\; \Omega^c\right\},\label{E1.7}
\\
\lambda_{1}&=\inf\left\{[u]^2_1+[u]^2_{\dot{H}^s} \; :\; \norm{u}_{L^2(\Omega)} =1\right\}.\nonumber
\end{align}
We show that the nonlocal term also contributes in the nonexistence theorem in the following sense.
\begin{cor}\label{C1.5}
Let $\Omega$ be a bounded, star-shaped $C^2$ domain.
For $\lambda< a(1-s)\lambda_{1,s}$, \eqref{BN} does not admit any non-trivial solution in $C^2(\Omega)\cap C(\Rn)$, and for $\lambda\leq a(1-s)\lambda_{1,s}$ it does not have a positive solution in
$C^2(\Omega)\cap C(\Rn)$.
\end{cor}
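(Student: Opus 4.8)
The strategy is to insert the nonlinearity $f(u)=|u|^{2^*-2}u+\lambda u$ into the Pohozaev identity of Theorem~\ref{T-main} and to read off the sign of the boundary term. Since $n\geq 3$ we have $2^*-2>0$, so $f\in C^1(\R)$ (hence locally Lipschitz) and $F(u):=\frac{1}{2^*}|u|^{2^*}+\frac{\lambda}{2}u^2$ satisfies $F(0)=0$ and $F'=f$; thus Theorem~\ref{T-main} applies to any solution of \eqref{BN} in $C^2(\Omega)\cap C(\Rn)$, and in particular $u\in C^{1,\alpha}(\bar\Omega)$. Since $\cL$ and the exterior condition $u=0$ in $\Omega^c$ are translation invariant, after a translation we may assume $\Omega$ is star-shaped with respect to the origin, so that $x\cdot\nu(x)\geq 0$ on $\partial\Omega$. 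Substituting $f$ and $F$ into the equivalent form of \eqref{ET-01A} recorded in the remark following Theorem~\ref{T-main} and using the critical cancellation $\frac{2-n}{2}+\frac{n}{2^*}=0$, the terms $\int_\Omega|u|^{2^*}\dx$ drop out, leaving
$$\lambda\int_\Omega u^2\dx-(1-s)\,a\,[u]^2_s=\frac12\int_{\partial\Omega}\left(\frac{\partial u}{\partial\nu}\right)^2(x\cdot\nu)\,dS\;\geq\;0 .$$

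Next I would combine this with the variational characterisation \eqref{E1.7} of $\lambda_{1,s}$: since $u\in H^s(\Rn)$ with $u=0$ in $\Omega^c$, one has $[u]^2_s\geq\lambda_{1,s}\norm{u}_{L^2(\Rn)}^2=\lambda_{1,s}\int_\Omega u^2\dx$, and therefore
$$\lambda\int_\Omega u^2\dx\;\geq\;(1-s)\,a\,[u]^2_s\;\geq\;a(1-s)\,\lambda_{1,s}\int_\Omega u^2\dx .$$
If $u\not\equiv0$ then $\int_\Omega u^2\dx>0$ (because $u$ vanishes off $\Omega$), so dividing gives $\lambda\geq a(1-s)\lambda_{1,s}$; equivalently, $\lambda<a(1-s)\lambda_{1,s}$ forces $u\equiv0$ in $\Rn$, which is the first assertion.

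For the second assertion, suppose $\lambda\leq a(1-s)\lambda_{1,s}$ and $u>0$ in $\Omega$. By the previous step $\lambda=a(1-s)\lambda_{1,s}\geq 0$, and inserting this into the displayed identity yields
$$\frac12\int_{\partial\Omega}\left(\frac{\partial u}{\partial\nu}\right)^2(x\cdot\nu)\,dS=a(1-s)\left(\lambda_{1,s}\int_\Omega u^2\dx-[u]^2_s\right)\;\leq\;0 ,$$
so, being also nonnegative, this boundary integral vanishes. Because $\int_{\partial\Omega}x\cdot\nu\,dS=n|\Omega|>0$, the continuous function $x\cdot\nu$ is strictly positive on a subset of $\partial\Omega$ of positive surface measure, and hence $\frac{\partial u}{\partial\nu}(x_0)=0$ at some $x_0\in\partial\Omega$ with $x_0\cdot\nu(x_0)>0$. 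On the other hand $\cL u=|u|^{2^*-2}u+\lambda u\geq 0$ in $\Omega$, $u>0$ in $\Omega$, $u(x_0)=0$, and $u\geq0$ on all of $\Rn$; the Hopf boundary point lemma for $\cL$ then gives $\frac{\partial u}{\partial\nu}(x_0)<0$, a contradiction. Thus \eqref{BN} admits no positive solution when $\lambda\leq a(1-s)\lambda_{1,s}$.

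Given Theorem~\ref{T-main}, the proof reduces to the exponent arithmetic above, the Poincar\'e-type inequality \eqref{E1.7}, and a Hopf boundary point lemma for $\cL=-\Delta+a(-\Delta)^s$. The step needing the most care is the last, but the Hopf lemma for such mixed operators is by now standard (and the nonlocal part, which sees $u\geq0$ everywhere, only helps), so I would simply quote it from the available maximum-principle theory (cf.\ \cite{BMS22}); I do not expect a further obstacle.
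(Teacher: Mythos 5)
Your proposal is correct and follows essentially the same route as the paper: translate so $\Omega$ is star-shaped about $0$, substitute $F(u)=\frac{1}{2^*}|u|^{2^*}+\frac{\lambda}{2}u^2$ into \eqref{ET-01A}, use the critical cancellation and the equation tested against $u$ to reduce to $\lambda\int_\Omega u^2\dx=\frac12\int_{\partial\Omega}(\partial_\nu u)^2(x\cdot\nu)\,dS + a(1-s)[u]_s^2$, then apply the variational characterisation of $\lambda_{1,s}$ for the strict inequality and Hopf's lemma in the borderline case. The only cosmetic differences are that you spell out why $x\cdot\nu>0$ on a set of positive surface measure via the divergence theorem, and you cite \cite{BMS22} for the Hopf lemma whereas the paper appeals to \cite[Theorem~2.2]{BM24}.
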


The remaining part of this article is organized as follows. In the next section (Section~\ref{S-poh}) we prove our essential estimates and then establish Theorem~\ref{T-main} in a strictly star-shaped domain.
In Section~\ref{S-proof} we provide the proofs of our main results stated above.

\section{Pohozaev identity in a strictly star-shaped domain}\label{S-poh}
The aim of this section is to prove Theorem~\ref{T-06} which, in turn, proves Pohozaev identity in a strictly star-shaped domain (see Corollary~\ref{Cor-7}).
We start with the following notation, which will be used in this article. Given a set $D\subset\Rn$, we define the $C^{0,1}$ norm as
$$
\norm{u}_{C^{0,1}(D)} =\sup_{D}|u| + \sup_{x\neq y, x, y\in D} \frac{|u(x)-u(y)|}{|x-y|}.
$$
Note that the functions in $C^{0,1}(D)$ are basically bounded Lipschitz functions on $D$.
Now we recall the following result from \cite[Theorem~1.1 and 1.3]{BMS22}.
\begin{thm}\label{T-BMS}
Let $g$ be a continuous function and $u$ be a viscosity solution to 
$$\cL u = g \quad \text{in}\; \Omega, \quad \text{and}\quad u=0\quad \text{in} \; \Omega^c.$$
Then, given $A_0>0$, $u\in C^{1, \alpha}(\bar\Omega)\cap C^{0, 1}(\Rn)$ for some $\alpha\in (0,1)$, for all $a\in [0, A_0]$, and
$$\norm{u}_{C^{1,\alpha}(\bar\Omega)} + \norm{u}_{C^{0,1}(\Rn)}\,\leq C \norm{g}_{L^\infty(\Omega)},$$
for some constant $C$, not dependent on $g$ and $u$.
\end{thm}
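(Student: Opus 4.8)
I would prove the three assertions --- the global bound $u\in C^{0,1}(\Rn)$, the regularity $u\in C^{1,\alpha}(\bar\Omega)$, and the scale-invariant estimate --- simultaneously, exploiting throughout that $-\Delta$ is the leading term while $a(-\Delta)^s$ is genuinely of lower order ($2s<2$); the point is that near $\partial\Omega$ the solution should inherit the $\dist(\cdot,\partial\Omega)$-behaviour of the classical Poisson problem rather than the $\dist(\cdot,\partial\Omega)^s$-behaviour of the pure fractional problem. The scheme is: (i) an $L^\infty$ barrier; (ii) a $\dist^\gamma$ barrier giving boundary decay and global Hölder regularity; (iii) a sharp estimate for $(-\Delta)^s u$ up to $\partial\Omega$; (iv) reading \eqref{E1.1} as a local Poisson problem with mildly singular right-hand side and applying a weighted boundary Schauder estimate; (v) upgrading $C^1(\bar\Omega)$ to $C^{0,1}(\Rn)$.

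\emph{Steps (i)--(ii).} The bound $\norm{u}_{L^\infty(\Rn)}\le C\norm{g}_{L^\infty(\Omega)}$ follows from the comparison principle for viscosity sub/supersolutions of $\cL$ (valid when the exterior data are ordered) applied to $\pm u$ against a fixed bounded barrier, e.g.\ $C\norm{g}_\infty\Phi$ with $\Phi$ the $\cL$-torsion function of a ball $B\supset\bar\Omega$. Next fix $\gamma\in(\max\{0,2s-1\},1)$ close to $1$. The function $W=C\norm{g}_\infty\,\dist(\cdot,\partial\Omega)^\gamma$, truncated at a fixed height and extended by $0$, is a supersolution of $\cL$ in a boundary collar: $-\Delta(\dist^\gamma)\sim\gamma(1-\gamma)\dist^{\gamma-2}$ blows up strictly faster than $|(-\Delta)^s(\dist^\gamma)|\lesssim\dist^{\gamma-2s}+1$, since $\gamma-2<\gamma-2s$. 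Comparison gives the boundary decay $|u(x)|\le C\norm{g}_\infty\,\dist(x,\partial\Omega)^\gamma$, and combined with the interior Hölder estimate for $\cL$ (from the interior regularity theory of mixed operators cited in the introduction) this yields $u\in C^{0,\gamma}(\Rn)$ with $\norm{u}_{C^{0,\gamma}(\Rn)}\le C\norm{g}_\infty$.

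\emph{Steps (iii)--(v).} Fix an interior point $x$, put $d=\dist(x,\partial\Omega)$, and rescale $\cL u=g$ on $B_{d/2}(x)$ to $B_1$; this produces a mixed equation with coefficient $a_r\le a\le A_0$ and right-hand side of size $d^2\norm{g}_\infty$. Feeding the interior $C^{1,\mu}$ estimate for $\cL$ --- whose $L^\infty$ term \emph{and} tail term are both controlled by the decay $|u|\le C\norm{g}_\infty\dist^\gamma$ already obtained --- gives $[\nabla u]_{C^{0,\mu}(B_{d/4}(x))}\le C\norm{g}_\infty\,d^{\gamma-1-\mu}$ for any fixed $\mu\in(2s-1,1)$. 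Splitting $(-\Delta)^s u(x)$ into a symmetrized second-difference integral over $B_{d/4}(x)$, bounded by $[\nabla u]_{C^{0,\mu}(B_{d/4}(x))}\,d^{1+\mu-2s}$, plus a far tail bounded by $\norm{u}_{C^{0,\gamma}(\Rn)}(d^{\gamma-2s}+1)$, we obtain
\[
|(-\Delta)^s u(x)|\le C\norm{g}_\infty\bigl(1+\dist(x,\partial\Omega)^{\gamma-2s}\bigr),\qquad \gamma-2s>-1 .
\]
Hence $u$ solves the \emph{local} problem $-\Delta u=h$ in $\Omega$, $u=0$ on $\partial\Omega$, with $|h(x)|\le C\norm{g}_\infty(1+\dist(x,\partial\Omega)^{-\sigma})$ and $\sigma:=(2s-\gamma)_+<1$. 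A weighted boundary Schauder estimate for the Laplacian on the $C^2$ domain $\Omega$ --- namely $|{-\Delta u}|\le K\dist^{-\sigma}$, $\sigma<1$, $u|_{\partial\Omega}=0$ imply $\norm{u}_{C^{1,1-\sigma}(\bar\Omega)}\le CK$ --- yields $u\in C^{1,\alpha}(\bar\Omega)$ with $\alpha=1-\sigma>0$ and $\norm{u}_{C^{1,\alpha}(\bar\Omega)}\le C\norm{g}_\infty$. Finally, since $u\equiv0$ on $\Omega^c$, $\Omega$ is bounded with Lipschitz boundary, and for $x\in\Omega$, $y\in\Omega^c$ the segment $[x,y]$ meets $\partial\Omega$ (where $u=0$), one gets $|u(x)-u(y)|\le\norm{\nabla u}_{L^\infty(\Omega)}|x-y|$; thus $u\in C^{0,1}(\Rn)$ with $\norm{u}_{C^{0,1}(\Rn)}\le C\norm{u}_{C^{1,\alpha}(\bar\Omega)}\le C\norm{g}_\infty$. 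Uniformity in $a\in[0,A_0]$ persists because every constant came from estimates uniform in $a_r\le a\le A_0$.

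The hard part is the boundary analysis when $s$ is close to $1$: then $(-\Delta)^s u$ really does blow up like $\dist^{1-2s}$ near $\partial\Omega$, so the crude Calder\'on--Zygmund route for $-\Delta u=h$ does not reach $C^{1,\alpha}(\bar\Omega)$. One is forced to first extract the \emph{sharp} boundary decay of $\nabla u$ --- via interior estimates with tail terms, bootstrapped off the preliminary $\dist^\gamma$ barrier --- so as to see that the singular exponent $\sigma$ of $h$ stays below $1$, and then to invoke a sharp weighted boundary Schauder estimate rather than $L^p$ theory. When $a=0$ or $2s\le 1$ everything collapses to classical elliptic regularity.
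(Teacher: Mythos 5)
The paper does not actually prove this statement: it is imported verbatim from \cite[Theorems~1.1 and 1.3]{BMS22}, so there is no in-paper proof to compare your sketch against. I therefore assess the proposal on its own merits.

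Your overall strategy is the right one and captures what makes the result true: the fractional term is genuinely subordinate, so a $\dist^\gamma$ barrier (with $\gamma\in(\max\{0,2s-1\},1)$) gives Lipschitz-order boundary decay, one then bounds $(-\Delta)^s u$ pointwise by $1+\dist^{-\sigma}$ with $\sigma<1$, and finally reads the equation as a Poisson problem with a mildly singular right-hand side. Two remarks on the supporting details. First, the barrier step works even though you did not track the sign of $(-\Delta)^s(\dist^\gamma)$: for $\gamma>s$ the singular part of $(-\Delta)^s(\dist^\gamma)$ is in fact \emph{negative}, but since $\gamma-2<\gamma-2s$ the term $-\Delta(\dist^\gamma)\sim\gamma(1-\gamma)\dist^{\gamma-2}$ dominates it in absolute value, so the absolute-value comparison you wrote is exactly the right one. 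Second, the stated interior bound $[\nabla u]_{C^{0,\mu}(B_{d/4}(x))}\lesssim\norm{g}_\infty d^{\gamma-1-\mu}$ overstates what the rescaled interior estimate gives, because the nonlocal tail at scale $d$ is \emph{not} controlled by the boundary decay alone; splitting the tail between the collar (where the decay applies) and the bulk (where only the global $L^\infty$ bound applies) yields a contribution of size $\norm{g}_\infty d^{2s}$, so the correct exponent is $\min(\gamma,2s)-1-\mu$. This is harmless — $|(-\Delta)^s u|\lesssim\norm{g}_\infty(1+\dist^{-\sigma})$ with $\sigma=(2s-\gamma)_+<1$ survives — but the bookkeeping should be fixed.

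The genuine gap is the last step: the ``weighted boundary Schauder estimate'' ($|{-\Delta u}|\le K\dist^{-\sigma}$ with $\sigma<1$ and $u|_{\partial\Omega}=0$ imply $\norm{u}_{C^{1,1-\sigma}(\bar\Omega)}\le CK$) is asserted but not proved, and it is exactly the nontrivial crux of the boundary regularity when $s>\nicefrac12$. You correctly observe that Calder\'on--Zygmund/$L^p$ theory cannot reach it when $\sigma$ is close to $1$, so you cannot outsource it to textbook elliptic regularity. A complete argument would have to either prove the kernel estimate directly (half-space Green's function plus boundary flattening, which does work since $\sigma<1$ makes the relevant integrals converge at the boundary), cite a Gilbarg--H\"ormander-type intermediate Schauder theorem in weighted H\"older spaces, or replace this step by a compactness/blow-up argument at boundary points in the spirit of Ros-Oton--Serra, which is closer to what the boundary-regularity literature for mixed operators (including \cite{BMS22}) actually does. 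Everything else in the sketch — the torsion-function $L^\infty$ bound, the uniformity in $a\in[0,A_0]$ via the scaling $a_r=a(d/2)^{2-2s}\le A_0$, and the passage from $C^1(\bar\Omega)$ to $C^{0,1}(\Rn)$ using $u\equiv0$ on $\Omega^c$ — is in order.
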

From the above result, we see that $\grad u$ extends continuously to the boundary of $\Omega$. In our next result, we find a bound on 
$|D^2 u|$ and $|(-\Delta)^s u|$ near $\partial\Omega$. Such bounds will be useful to justify some integration-by-parts formula that we use and also to define integration of
$\Delta u$, $(-\Delta)^s u$ over $\Omega$.
\begin{thm}\label{T-03}
Let $\alpha$ be same as in Theorem~\ref{T-BMS}. Then there exists a constant $C$ such that for any solution $u\in C^2(\Omega)\cap C(\Rn)$ to
$$\cL u = g \quad \text{in}\; \Omega, \quad \text{and}\quad u=0\quad \text{in} \; \Omega^c,$$
with $\norm{g}_{C^{0,1}(\Omega)}\leq 1$, we have
\begin{align}
|D^2 u(x)| &\leq C ({\dist} (x, \partial \Omega))^{\alpha-1}, \label{ET2.2A}
\\
|(-\Delta)^s u(x)|& \leq C \left(1+1_{s\in[0,\frac{1}{2})}+1_{\{s=\frac{1}{2}\}} |\log{\dist}(x, \partial \Omega)|+ 
1_{\{s>\frac{1}{2}\}}({\dist}(x, \partial \Omega))^{1-2s}\right), \label{ET2.2B}
\end{align}
for all $x\in \Omega$.
\end{thm}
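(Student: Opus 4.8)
My plan is to establish the two boundary estimates by bootstrapping from the $C^{1,\alpha}(\bar\Omega)$ regularity of $u$ given by Theorem~\ref{T-BMS}, treating the local and nonlocal contributions to $\cL u = g$ separately. The starting point is that $u\in C^{1,\alpha}(\bar\Omega)\cap C^{0,1}(\Rn)$ with $u=0$ in $\Omega^c$, and $\|g\|_{C^{0,1}(\Omega)}\le 1$.

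First I would handle \eqref{ET2.2B}, the bound on $(-\Delta)^s u$. Fix $x\in\Omega$ and let $d=\dist(x)=\dist(x,\partial\Omega)$. Split the singular integral defining $(-\Delta)^s u(x)$ into the ball $B_{d/2}(x)$ and its complement. On $B_{d/2}(x)\subset\Omega$, I use the second-order Taylor expansion of $u$ together with the interior $C^2$ bound $|D^2u(y)|\le C\,(\dist(y))^{\alpha-1}$ (which is \eqref{ET2.2A}, proved next, but logically I would prove \eqref{ET2.2A} first and then do this part) — on $B_{d/2}(x)$ one has $\dist(y)\ge d/2$, so $|D^2 u|\lesssim d^{\alpha-1}$ there, and the symmetric-difference estimate gives a contribution $\lesssim d^{\alpha-1}\int_{B_{d/2}} |z|^{2-n-2s}\,dz \lesssim d^{\alpha-1}d^{2-2s}=d^{1-2s+\alpha}$, which is bounded (indeed $\to 0$) as $d\to 0$. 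On the complement, I use only that $u$ is globally Lipschitz and bounded: the piece over $B_1(x)\setminus B_{d/2}(x)$ is $\lesssim \int_{d/2}^{1} r^{1-n-2s}r^{n-1}\,dr = \int_{d/2}^1 r^{-2s}\,dr$, which is $O(1)$ when $s<\tfrac12$, $O(|\log d|)$ when $s=\tfrac12$, and $O(d^{1-2s})$ when $s>\tfrac12$ — exactly the three cases in \eqref{ET2.2B}; and the far piece over $\R^n\setminus B_1(x)$ is $\lesssim \|u\|_\infty\int_{|z|\ge 1}|z|^{-n-2s}\,dz=O(1)$.

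The core of the argument, and the main obstacle, is \eqref{ET2.2A}: the Hessian bound $|D^2u(x)|\le C\,(\dist(x))^{\alpha-1}$. From the equation, $-\Delta u = g - a(-\Delta)^s u$ in $\Omega$. The plan is an interior Schauder / rescaling argument: fix $x_0\in\Omega$ with $d=\dist(x_0)$ small, rescale to the unit ball by setting $u_{x_0}(z)=d^{-1-\alpha}\big(u(x_0+dz)-u(x_0)-d\,\nabla u(x_0)\cdot z\big)$ on $B_{1/2}$. Since $u\in C^{1,\alpha}(\bar\Omega)$, the functions $u_{x_0}$ are uniformly bounded in $C^{1,\alpha}(B_{1/2})$ (this is where the boundary $C^{1,\alpha}$ regularity enters — it controls $u$ on the whole rescaled ball, part of which may be close to or outside $\partial\Omega$ in original coordinates). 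The rescaled equation reads $-\Delta u_{x_0} = d^{1-\alpha}g(x_0+d\,\cdot) - a\,d^{1-\alpha}\big((-\Delta)^s u\big)(x_0+d\,\cdot)$, and one checks that $d^{1-\alpha}g$ is uniformly bounded while $d^{1-\alpha}(-\Delta)^s u(x_0+d\cdot)$ is controlled using the (already established, or bootstrapped) bound \eqref{ET2.2B} on the portion of $B_{1/2}(x_0\cdot)$ inside $\Omega$ together with the $C^{0,1}(\R^n)$ bound elsewhere — giving a right-hand side in, say, $L^\infty$ or $C^{-\alpha}$ uniformly. Interior elliptic estimates for $-\Delta$ then give $\|D^2 u_{x_0}\|_{L^\infty(B_{1/4})}\le C$ uniformly in $x_0$, and scaling back yields $|D^2 u(x_0)|\le C\,d^{\alpha-1}$.

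The delicate point to get right is the circularity between \eqref{ET2.2A} and \eqref{ET2.2B}: each estimate's proof appears to want the other. I would resolve this by first proving a \emph{weaker, non-sharp} version — e.g. $|D^2u(x)|\le C\,d^{-1}$ (or $d^{-2}$) from the naive bound $|\Delta u|\le |g| + a|(-\Delta)^s u|$ with the crude estimate $|(-\Delta)^s u(x)|\le C\,d^{-2s}\|u\|_{C^{0,1}}$ plus an interior gradient estimate — and then feeding this into the split for $(-\Delta)^s u$ to upgrade \eqref{ET2.2B}, and finally re-running the rescaling argument for \eqref{ET2.2A} with the now-sharp nonlocal term. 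Alternatively, one can run a single fixed-point/iteration on the exponent. I expect the bookkeeping in this bootstrap, and the uniform $C^{1,\alpha}(\bar\Omega)$ control of the rescaled functions near the boundary, to be the technically heaviest part; the integral splittings are routine once the interior Hessian bound is in hand.
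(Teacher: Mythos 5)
Your treatment of \eqref{ET2.2B} is correct and matches the paper: once the Hessian bound \eqref{ET2.2A} is available, splitting $(-\Delta)^s u(x)$ at scale $\dist(x)$ and using the Hessian bound on the near part, the Lipschitz bound on the mid-range annulus, and boundedness far away gives exactly the three regimes in \eqref{ET2.2B}.

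The gap is in your plan for \eqref{ET2.2A}. You propose to move $a(-\Delta)^s u$ to the right-hand side and apply interior elliptic estimates for $-\Delta$ alone to the rescaled function $u_{x_0}$, claiming the right-hand side is ``in $L^\infty$ or $C^{-\alpha}$ uniformly.'' But interior estimates for $-\Delta$ with merely $L^\infty$ (or worse) data give only $W^{2,p}$ and hence $C^{1,\gamma}$ control, \emph{not} a pointwise bound on $D^2 u_{x_0}$. To conclude $\|D^2 u_{x_0}\|_{L^\infty(B_{1/4})}\le C$ you would need a uniform $C^\beta$ bound on the rescaled right-hand side, and in particular on $d^{1-\alpha}(-\Delta)^s u(x_0+d\cdot)$. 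Controlling the H\"older seminorm of $(-\Delta)^s u$ at scale $d$ is a genuinely harder piece of information than the $L^\infty$ bound \eqref{ET2.2B} — it needs difference-quotient control of $\nabla u$ at scale $d$, which is essentially what you are trying to prove. Your proposed bootstrap does not close this loop: each pass produces only $L^\infty$ information about the nonlocal term, which is never enough to run the Schauder step. Moreover, even the ``crude'' starting bound $|(-\Delta)^s u|\lesssim d^{-2s}$ is not free when $s\ge\tfrac12$, since Lipschitz regularity of $u$ alone does not make the near part of the singular integral converge.

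The paper avoids the circularity entirely by (i) differentiating the equation first — taking the difference quotient $w_h(x)=h^{-1}(u(x+eh)-u(x))-\nabla u(x_0)\cdot e$, which solves $\cL w_h=G_h$ with $G_h=h^{-1}(g(\cdot+eh)-g)$ uniformly bounded by $\|g\|_{C^{0,1}}\le1$, and (ii) applying an interior $C^1$ estimate for the \emph{full mixed operator} $-\Delta+r^{2-2s}a(-\Delta)^s$ (from \cite{MZ21}) to a cut-off of the rescaled difference quotient on $B_1$, rather than splitting local/nonlocal. The $L^\infty$ bound $\lesssim r^\alpha$ on the rescaled difference quotient comes from $u\in C^{1,\alpha}(\bar\Omega)$, and the $C^1$ estimate then gives $|D^2 u(x_0)|\lesssim r^{\alpha-1}$ directly. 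This is one derivative lower than your plan — a $C^1$ estimate for $w_h$ instead of a $C^2$ estimate for $u$ — so $L^\infty$ control of the right-hand side suffices, and because the operator is kept intact there is no nonlocal remainder to bound at all.
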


\begin{proof}

From Theorem~\ref{T-BMS} we know that $u\in C^{1, \alpha}(\bar\Omega)$, and therefore, we get
$\grad u\in C^\alpha(\bar\Omega)$. For notational economy, we denote $\delta(x)={\dist}(x, \partial\Omega)$.
Fix a point $x_0\in \Omega$
and $4r=\delta(x_0)$. Let us now define
$$w_h(x)=\frac{u(x+eh)-u(x)}{h}-\grad u(x_0)\cdot e,$$
where $e$ is any unit vector.
Then
$$L w_h=\frac{g(x+eh)-g(x)}{h}:=G_h(x)\quad \text{in}\; B_{2r}(x_0).$$
Letting
$\xi(z)= w_h(r z+x_0)$ for $z\in B_2(0)$, we then have
$$-\Delta \xi + r^{2-2s}a\, (-\Delta)^s \xi =  r^2G_h(zr+x_0)
\quad \text{in}\; B_2(0).$$
Consider a smooth cut-off function $\chi$ satisfying $\chi=1$ in $B_1$
and $\chi=0$ in $B^c_{3/2}$. From the above equation it then follows that, for $\varphi=\xi\chi$,
$$-\Delta\varphi + r^{2-2s}a\, (-\Delta)^s \varphi = r^2 G_h(zr+x_0)
-r^{2-2s}(-\Delta)^s((1-\chi)\xi)
\quad \text{in}\; B_1(0).$$
Applying \cite[Theorem~4.1]{MZ21} we find a constant $\kappa$, independent of $r$, satisfying
$$\norm{\xi}_{C^1(B_{1/2})}\leq \kappa \left(r^2\norm{G_h(zr+x_0)}_{L^\infty(B_1)} + r^{2-2s}\norm{(-\Delta)^s((1-\chi)\xi)}_{L^\infty(B_1)} + \norm{\varphi}_{L^\infty(\Rn)}\right).
$$
By our assumption of $g$, we have 
$$\norm{G_h(zr+x_0)}_{L^\infty(B_1)}\leq 1. $$
Since $u$ is in $C^{0,1}(\Rn)$ by Theorem~\ref{T-BMS}, we have
$w_h$ is globally bounded, independent of
$r, h$ and $x_0$. Hence $|(-\Delta)^s((1-\chi)\xi)|$
is bounded in $B_1$ and the bound does not depend on $r, h$ and $x_0$. Again, $\varphi$ is supported in $B_2$ and for $z\in B_2$ we have 
$$|\varphi(z)|\leq |\xi(z)|\leq |w_h(rz+x_0)|\lesssim r^\alpha,$$
using the fact $u\in C^{1, \alpha}(\bar\Omega)$.
Therefore, computing the derivative of $\xi$ at $z=0$ it follows that
$$\frac{|\partial_{x_j} u(x_0+ he)- \partial_{x_j} u(x_0)|}{h}
\leq \kappa_2 r^{\alpha-1}\quad \text{for all}\; h\in (0,r), j=1,..,n$$
for some constant $\kappa_2$. Letting $h\to 0$ we obtain the desired bound of $|D^2 u(x_0)|$. This proves \eqref{ET2.2A}.

We are now left to show the bound of $|(-\Delta)^s u(x)|$. Fix $x\in\Omega$ and let $r=\delta(x)/4$. Then 
using \eqref{ET2.2A} and the Lipschitz bound of $u$ in Theorem~\ref{T-BMS} we obtain
\begin{align*}
&|(-\Delta)^s u(x)|
\\
&\leq c_{n,s}\int_{B_r(0)} \frac{|u(x+y)+u(x-y)-2u(x)|}{|y|^{n+2s}}\dy + c_{n,s}\int_{B^c_r(0)} \frac{|u(x+y)+u(x-y)-2u(x)|}{|y|^{n+2s}}\dy
\\
&\leq C r^{\alpha-1} \int_{B_r(0)} |y|^{2-2s-n}\dy + C \int_{B_1(0)\setminus B^c_r(0)} |y|^{1-n-2s}\dy
+ C \int_{|y|\geq 1} |y|^{-n-2s} \dy
\\
&\leq C(r^{1-2s +\alpha} + 1+1_{\{s=\frac{1}{2}\}} |\log r| + 1_{\{s>\frac{1}{2}  \}}r^{1-2s} ).
\end{align*}
This gives us \eqref{ET2.2B}, completing the proof.
\end{proof}

We also need the next key lemma to compute the contribution from the nonlocal operator at the boundary.

\begin{lem}\label{L05}
Let $\mathfrak{g}(x)=(-\Delta)^{s/2}u(x)$ where $u\in C^{0,1}(\Rn)\cap L^\infty(\Rn)\cap C^2(\Omega)$ .
Also, assume that for some constant $C$ we have
\begin{equation}\label{EL05A0}
|D^2 u(x)|\leq C ({\rm dist}(x, \partial\Omega))^{-1}
\quad \text{and}\quad  |u(x)|\leq C\, {\rm dist}(x, \partial\Omega)\quad \text{for}\;\; x\in\Omega.
\end{equation}
 Then the following hold:
\begin{itemize}
\item[(i)] $\mg\in C^{1-s}(\Rn)$
\item[(ii)] for $\beta\in [1-s, 2-s]$,
 we have
$$ \norm{\mg}_{C^\beta(\breve\Omega_\varrho)}
\leq C \varrho^{1-s-\beta}\quad \text{for}\; \varrho\in (0,1),$$
for some constant $C$, where
$$\breve\Omega_\varrho=\{x\in\Rn\; :\; {\rm dist}(x, \partial \Omega)\geq \varrho\}.$$
\end{itemize}
\end{lem}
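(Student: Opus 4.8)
\emph{Part (i).} I would write $\mg(x)=c_{n,s/2}\int_{\Rn}\frac{u(x)-u(y)}{|x-y|^{n+s}}\dy$, which is absolutely convergent because $s<1$: the Lipschitz bound on $u$ controls the integrand near $y=x$ and the $L^\infty$ bound near $y=\infty$; in particular $\|\mg\|_{L^\infty(\Rn)}\le C\|u\|_{C^{0,1}(\Rn)}$. For $x_1,x_2$ with $\rho:=|x_1-x_2|\le1$, split the integral at $|y-x_1|=2\rho$. On $\{|y-x_1|<2\rho\}$, each of the two fractions $\frac{u(x_i)-u(y)}{|x_i-y|^{n+s}}$ is bounded, via the Lipschitz estimate, by an integral of $|z|^{1-n-s}$ over a ball of radius $\lesssim\rho$, which is $\le C\rho^{1-s}$. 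On $\{|y-x_1|\ge2\rho\}$, write the difference of the two fractions as
$$\frac{u(x_1)-u(x_2)}{|x_1-y|^{n+s}}+\big(u(x_2)-u(y)\big)\Big(\frac{1}{|x_1-y|^{n+s}}-\frac{1}{|x_2-y|^{n+s}}\Big),$$
and use $|u(x_1)-u(x_2)|\le C\rho$, $\big||x_1-y|^{-n-s}-|x_2-y|^{-n-s}\big|\le C\rho|x_1-y|^{-n-s-1}$, and $|u(x_2)-u(y)|\le C|x_2-y|$, getting again $\le C\rho^{1-s}$. This shows $\mg\in C^{1-s}(\Rn)$ with norm $\le C\|u\|_{C^{0,1}(\Rn)}$ --- i.e.\ (ii) with $\beta=1-s$ --- using only that $u$ is bounded and Lipschitz.

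\emph{Reduction of (ii).} Since $u\in C^2(\Omega)$ and, on $\Rn\setminus\bar\Omega$, $\mg(x)=-c\int_{\Omega}u(y)\,|x-y|^{-n-s}\dy$, we have $\mg\in C^1(\Rn\setminus\partial\Omega)$; thus the $C^\beta$-norm over $\breve\Omega_\varrho$ is meaningful for $\beta\in(1,2-s]$, and any two points of $\breve\Omega_\varrho$ in different connected components lie at distance $\ge2\varrho$. I would reduce (ii) to the two local estimates
$$\|\nabla\mg\|_{L^\infty(B_{\varrho/8}(x_0))}\le C\varrho^{-s}\quad\text{and}\quad[\nabla\mg]_{C^{1-s}(B_{\varrho/8}(x_0))}\le C\varrho^{-1}\qquad(x_0\in\breve\Omega_\varrho),$$
with $C$ independent of $x_0$ and $\varrho$. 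Indeed, interpolating these two gives $[\nabla\mg]_{C^{\beta-1}(B_{\varrho/8}(x_0))}\le C\varrho^{1-s-\beta}$ for $\beta\in(1,2-s)$; and for $x,y\in\breve\Omega_\varrho$ one distinguishes $|x-y|\le\varrho/8$ (so $y\in B_{\varrho/8}(x)$ and the local estimates apply) from $|x-y|\ge\varrho/8$ (then use part (i) when $\beta\le1$, or $\|\nabla\mg\|_{L^\infty(\breve\Omega_\varrho)}\le C\varrho^{-s}$ when $\beta>1$, together with $|x-y|^{1-s-\beta}\le C\varrho^{1-s-\beta}$, which holds since $\beta\ge1-s$). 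A short bookkeeping then gives $\|\mg\|_{C^\beta(\breve\Omega_\varrho)}\le C\varrho^{1-s-\beta}$ for all $\beta\in[1-s,2-s]$.

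\emph{The local estimates.} Fix $x_0\in\breve\Omega_\varrho$; let $\ell$ be the first-order Taylor polynomial of $u$ at $x_0$ (so $\ell\equiv0$ when $x_0\notin\bar\Omega$, since then $u\equiv0$ near $x_0$), take $\eta\in C_c^\infty(B_{\varrho/2}(x_0))$ with $\eta\equiv1$ on $B_{\varrho/4}(x_0)$ and $|D^k\eta|\le C\varrho^{-k}$, and split $u=v_1+v_2$ with $v_1:=\eta(u-\ell)$. On $B_{\varrho/2}(x_0)$ we have $\dist(\cdot,\partial\Omega)\ge\varrho/2$, so \eqref{EL05A0} gives $|D^2u|\le C\varrho^{-1}$ there, hence (Taylor) $|u-\ell|\le C\varrho$ and $|\nabla(u-\ell)|\le C$ on $B_{\varrho/2}(x_0)$; by the Leibniz rule $v_1\in C^{1,1}(\Rn)$ is supported in $B_{\varrho/2}(x_0)$ with $\|v_1\|_\infty\le C\varrho$, $\|\nabla v_1\|_\infty\le C$, $\|D^2v_1\|_\infty\le C\varrho^{-1}$, while $v_2=u-v_1$ is globally bounded Lipschitz (norms $\le C$), equals $\ell$ on $B_{\varrho/4}(x_0)$, and $\nabla v_2$ is supported in $\overline{B_{\varrho/2}(x_0)}\cup\bar\Omega$. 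For $v_1$: the fractional Schauder estimate $(-\Delta)^{s/2}\colon C^{1,1}(\Rn)\cap L^\infty(\Rn)\to C^{2-s}(\Rn)$, applied after rescaling $v_1$ to unit scale (or obtained directly from $(-\Delta)^{s/2}v_1(x)=-\tfrac c2\int_{\Rn}\frac{v_1(x+z)+v_1(x-z)-2v_1(x)}{|z|^{n+s}}\dz$, splitting at $|z|\sim|x-x'|$ and using $|\delta^2_z\nabla v_1|\le\min\{\|D^2v_1\|_\infty|z|,\|\nabla v_1\|_\infty\}$), yields $\|\nabla(-\Delta)^{s/2}v_1\|_\infty\le C\varrho^{-s}$ and $[\nabla(-\Delta)^{s/2}v_1]_{C^{1-s}(\Rn)}\le C\varrho^{-1}$. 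For $v_2$: since $v_2$ is affine on $B_{\varrho/4}(x_0)$, the part of the integral over $\{|z|<\varrho/8\}$ vanishes on $B_{\varrho/8}(x_0)$, so there $(-\Delta)^{s/2}v_2(x)=-\tfrac c2\int_{|z|\ge\varrho/8}\frac{v_2(x+z)+v_2(x-z)-2v_2(x)}{|z|^{n+s}}\dz$ is smooth; differentiating and substituting $w=x\pm z$ gives $\nabla(-\Delta)^{s/2}v_2(x)=-c\int_{|w-x|\ge\varrho/8}\frac{\nabla v_2(w)}{|w-x|^{n+s}}\,dw+c\,\nabla v_2(x)\,C_0$ with $C_0:=\int_{|z|\ge\varrho/8}|z|^{-n-s}\dz\lesssim\varrho^{-s}$. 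Using $\|\nabla v_2\|_\infty\le C$, $\supp\nabla v_2\subset\overline{B_{\varrho/2}(x_0)}\cup\bar\Omega$, and $\int_{|w-x|\ge\varrho/8}|w-x|^{-n-s}\,dw\lesssim\varrho^{-s}$ gives the $L^\infty$ bound $C\varrho^{-s}$; estimating the kernel difference $\frac{\mathbf{1}_{|w-x|\ge\varrho/8}}{|w-x|^{n+s}}-\frac{\mathbf{1}_{|w-x'|\ge\varrho/8}}{|w-x'|^{n+s}}$ on the $|x-x'|$-thin annulus about $\{|w-x|=\varrho/8\}$ (measure $\lesssim\varrho^{n-1}|x-x'|$, kernel $\lesssim\varrho^{-n-s}$) and on the complement $\{|w-x|\gtrsim\varrho\}$ (difference $\lesssim|x-x'|\,|w-x|^{-n-s-1}$) gives $[\nabla(-\Delta)^{s/2}v_2]_{C^{1-s}(B_{\varrho/8}(x_0))}\le C\varrho^{-1}$. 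Adding the two contributions establishes the local estimates.

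\emph{Main difficulty.} The crux is the exact dependence on $\varrho$: the only quantitative hypothesis is $|D^2u|\lesssim\dist(\cdot,\partial\Omega)^{-1}$, which at scale $\varrho$ only degrades like $\varrho^{-1}$, and one must verify that neither the nonlocal tail of $v_1$ nor the kernel-difference estimate for $v_2$ costs an extra power of $\varrho$. For the $v_2$ estimate this depends on carefully separating the $|x-x'|$-thin annulus where the indicator functions jump (where the kernel is large, $\sim\varrho^{-n-s}$, but the region is thin) from the rest (where the kernel difference gains a factor $|x-x'|\varrho^{-1}$), so that one lands at $\varrho^{-1}$ and not $\varrho^{-1-s}$. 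By contrast, the fractional Schauder estimate $C^{1,1}\to C^{2-s}$ with the right scaling, and the reassembly over the non-convex, possibly disconnected set $\breve\Omega_\varrho$, are routine.
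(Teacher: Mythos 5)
Your proof is correct and is, at its core, the same argument as the paper's: (i) is proved verbatim by splitting the kernel integral at scale $|x_1-x_2|$; (ii) is proved by a cutoff decomposition at scale $\varrho$, estimating the localized piece through the $|D^2u|\lesssim\varrho^{-1}$ bound and the tail by a kernel-difference estimate. Two stylistic differences are worth noting. First, you set $v_1=\eta(u-\ell)$ with $\ell$ the first-order Taylor polynomial, so that the bounds $\|v_1\|_\infty\lesssim\varrho$, $\|\nabla v_1\|_\infty\lesssim 1$, $\|D^2v_1\|_\infty\lesssim\varrho^{-1}$ all follow from Taylor's theorem and the $D^2u$ hypothesis alone, and the cases $x_0\in\Omega$ and $x_0\in\Omega^c$ are handled uniformly ($\ell\equiv 0$ in the latter case). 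The paper instead takes $\varphi=u\chi$, which forces a direct use of the bound $|u|\lesssim\dist(\cdot,\Omega^c)$ to control $u\,D^2\chi$, and handles the exterior case $x,y\in\Omega^c\cap\breve\Omega_\varrho$ by a separate, direct kernel-difference argument; your unification is slightly cleaner. Second, for intermediate $\beta\in(1,2-s)$ you interpolate between the $L^\infty$ and $C^{1-s}$ bounds of $\nabla\mg$ on balls $B_{\varrho/8}(x_0)$, whereas the paper proves the $C^{\beta'}$ seminorm bound directly for each $\beta'\in(0,1-s]$ from the same split at $|z|\sim r$; these are equivalent. One small expository caveat: in the tail estimate for $v_2$ the kernel-difference computation actually produces a bound of the form $r\,\varrho^{-1-s}$ for the $C^1$ increment, and it is only after dividing by $r^{1-s}$ and using $r\lesssim\varrho$ that one lands at $\varrho^{-1}$, so one cannot quite say the argument "lands at $\varrho^{-1}$ and not $\varrho^{-1-s}$" in an absolute sense---but the conclusion $[\nabla\mg]_{C^{1-s}(B_{\varrho/8}(x_0))}\lesssim\varrho^{-1}$ is correct, and in fact this is exactly the same restriction-to-$r\lesssim\varrho$ that the paper also invokes.
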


\begin{proof}
We start with (i). Since $u$ is Lipschitz, $\mg(x)$ is bounded. More precisely,
\begin{align*}
|\mg(x)|&\leq c_{n,\frac{s}{2}}\int_{\Rn} \frac{|u(x)-u(x+z)|}{|z|^{n+s}} \dz
\\
&= c_{n,\frac{s}{2}} \int_{|z|\leq 1} \frac{|u(x)-u(x+z)|}{|z|^{n+s}} \dz
+ c_{n,\frac{s}{2}} \int_{|z|> 1} \frac{|u(x)-u(x+z)|}{|z|^{n+s}} \dz
\\
&\leq c_{n,\frac{s}{2}} \norm{u}_{C^{0,1}} \left[\int_{|z|\leq 1} |z|^{1-n-s} \dz
+ \int_{|z|> 1} \frac{1}{|z|^{n+s}}ds\right]
\\
&\leq C \norm{u}_{C^{0,1}}
\end{align*}
for all $x\in\Rn$. Therefore, for $|x-y|\geq 1$, it follows that
$$\frac{|\mg(x)-\mg(y)|}{|x-y|^{1-s}}\leq 2\norm{\mg}_{L^\infty(\Rn)}.$$
Now let $|x-y|\leq 1$. Then
\begin{align*}
|\mg(x)-\mg(y)|&\leq c_{n,\frac{s}{2}}\int_{|z|\leq |x-y|}\frac{|u(x)-u(x+z)-u(y)+u(y+z)|}{|z|^{n+s}} \dz
\\
&\qquad + c_{n,\frac{s}{2}} \int_{|z|> |x-y|}\frac{|u(x)-u(x+z)-u(y)+u(y+z)|}{|z|^{n+s}} \dz
\\
&\leq
2c_{n,\frac{s}{2}}\norm{u}_{C^{0,1}(\Rn)} \int_{|z|\leq |x-y|}|z|^{1-n-s} d{z}
+ 2c_{n,\frac{s}{2}} \norm{u}_{C^{0,1}(\Rn)}|x-y| \int_{|z|> |x-y|}\frac{1}{|z|^{n+s}}ds
\\
&\leq C \norm{u}_{C^{0,1}(\Rn)}|x-y|^{1-s}.
\end{align*}
This completes the proof of (i).

Next we consider (ii).
Consider $x\in \Omega^c$ with ${\rm dist}(x, \partial \Omega)\geq \varrho$. It is easy to see that
$$\grad \mg(x)=c_{n,\frac{s}{2}}\int_{\Rn}\frac{-\grad u(x+z)}{|z|^{n+s}}\dy
=c_{n,\frac{s}{2}}\int_{|z|\geq \varrho/2}\frac{-\grad u(x+z)}{|z|^{n+s}} \dy.$$
Since $u$ is Lipschitz,
$$\norm{\grad \mg}_{L^\infty(\Omega^c\cap\breve\Omega_\varrho)}
\lesssim \varrho^{-s}.$$
Now let $x\in\Omega\cap\breve\Omega_\varrho$. For $|h|\in (0, \varrho/4)$ and a unit vector $e$, we compute
$$\frac{\mg(x+eh)-\mg(x)}{h}= c_{n,\frac{s}{2}}\int_{\Rn} \left[\frac{u(x+eh)-u(x)}{h}- \frac{u(x+eh+z)-u(x+z)}{h} \right]\frac{1}{|z|^{n+s}}\dz.$$
Since $u\in C^{0,1}(\Rn)\cap C^2(\Omega)$, letting $h\to 0$ we obtain
$$\grad \mg(x)\cdot e= c_{n,\frac{s}{2}} \int_{\Rn} \frac{\grad u(x)\cdot e- \grad u(x+z)\cdot e}{|z|^{n+s}} \dz.$$
Again, since $z\mapsto \grad u(z)$ is continuous in 
$\Rn\setminus \partial \Omega$, we have $\grad\mg(\cdot)\cdot e$ is continuous at $x$, implying $\grad \mg(x)$ exists and 
$$\grad \mg(x)= c_{n,\frac{s}{2}} \int_{\Rn} \frac{\grad u(x)-\grad u(x+z)}{|z|^{n+s}} \dz.$$
Using the upper bound of $D^2 u$ in \eqref{EL05A0} and we note that
\begin{align*}
| \mg_{x_i}(x)|&=c_{n,\frac{s}{2}} \left|\int_{\Rn} \frac{u_{x_i}(x)-u_{x_i}(x+z)}{|z|^{n+s}} \dz \right |
\\
&\leq \kappa_1 \varrho^{-1}\int_{|z|\leq \varrho/2} |z|^{1-n-s} \dz
+ \kappa_1 \varrho^{-s}
\\
&\leq \kappa_2 \varrho^{-s},
\end{align*}
for some constants $\kappa_1, \kappa_2$. Thus,
we get
\begin{equation}\label{EL05A}
\norm{\grad \mg}_{L^\infty(\breve\Omega_\varrho)}\leq C \varrho^{-s}\quad \text{for}\; \varrho\in (0,1).
\end{equation}
Let $\beta\in [1-s, 1]$ and $x, y\in\breve\Omega_\varrho$. Then for $|x-y|\leq \varrho/4$ , we use \eqref{EL05A} to obtain
$$\frac{|\mg(x)-\mg(y)|}{|x-y|^\beta}\leq C \varrho^{-s}|x-y|^{1-\beta}\leq 4^{\beta-1}C\varrho^{1-s-\beta}$$
and for $|x-y|>\varrho/4$, we use (i) to see that
$$\frac{\mg(x)-\mg(y)}{|x-y|^\beta}\leq \norm{\mg}_{C^{1-s}(\Rn)} |x-y|^{1-s-\beta}\leq \norm{\mg}_{C^{1-s}(\Rn)} 4^{\beta+s-1} \varrho^{1-s-\beta}.$$
This proves (ii) when $\beta\in [1-s, 1]$.

Set $\beta=1+\beta'$ where $\beta'\in (0, 1-s]$. Let $x, y\in \breve\Omega_\varrho$.
If $|x-y|\geq \varrho/4$, from \eqref{EL05A} we have
$$\frac{|\grad \mg(x)-\grad \mg (y)|}{|x-y|^{\beta'}}\leq C \varrho^{-s-\beta'}= \varrho^{1-s-\beta}.$$
So we consider $|x-y|\leq \varrho/4$. If $x\in \Omega^c\cap\breve\Omega_\varrho$,
we must have $y\in \Omega^c\cap\breve\Omega_\varrho$. Then
\begin{align*}
\mg_{x_i}(x)-\mg_{x_i}(y)= c_{n, \frac{s}{2}}\int_{|z-x|\geq \frac{\varrho}{4}}u_{x_i}(z) \frac{1}{|z-x|^{n+s}}\dz - c_{n, \frac{s}{2}}\int_{|z-y|\geq \frac{\varrho}{4}}u_{x_i}(z) \frac{1}{|z-y|^{n+s}}\dz. 
\end{align*}
Since $\grad u=0$ in $\bar\Omega^c$, the above can be written as 
\begin{align*}
\mg_{x_i}(x)-\mg_{x_i}(y)= c_{n, \frac{s}{2}}\int_{|z-x|\geq \frac{\varrho}{2}}u_{x_i}(z) \left[\frac{1}{|z-x|^{n+s}}\dz - \frac{1}{|z-y|^{n+s}}\right]\dz. 
\end{align*}
For $|z|\geq \varrho/2$ and $|\tilde z|\leq \varrho/4$, we note that
$$|{|z|^{-n-s}}-{|z+\tilde z|^{-n-s}}|
= {|z|^{-n-s}}|1-\bigl|\frac{z}{|z|}-\frac{\tilde z}{|z|}\bigr|^{-n-s}|
\leq\kappa_3 \frac{1}{|z|^{n+s}}\frac{|\tilde z|}{|z|},
$$
for some constant $\kappa_3$.
Putting it in the above calculation and using the global bound of $|\grad u|$, we obtain
$$|\mg_{x_i}(x)-\mg_{x_i}(y)|\leq \kappa_4 |x-y| \int_{|z-x|\geq \frac{\varrho}{2}}\frac{1}{|z-x|^{n+1+s}}\dz \leq \kappa_5
|x-y|^{\beta'} \varrho^{1-s-\beta}.$$
Let $x, y\in \Omega\cap\breve\Omega_\varrho$ with $|x-y|\leq \varrho/4$.
Let $\theta=\frac{1}{4}{\rm dist}(x, \partial\Omega)$. Clearly, $\theta\geq \varrho/4$.
Consider a smooth cut-off function $\chi$ such that
$$\chi=1\quad \text{in}\; B_{3\theta}(x),\quad \text{and}\quad
\chi=0\quad \text{in}\; B^c_{7\theta/2}(x).$$
Note that
$$|\grad\chi|\leq \kappa \theta^{-1},\quad |D^2\chi|\leq \kappa \theta^{-2},
$$
for some $\kappa$ independent of $\varrho$.
Letting $\varphi=u\chi$ and $\xi=u(1-\varphi)$ we see that
$$u_{x_i}= \varphi_{x_i} + \xi_{x_i}.$$
Then
\begin{equation}\label{EL05B}
\mg_{x_i}(x)-\mg_{x_i}(y)=
(-\Delta)^{s/2}(\varphi_{x_i}(x)-\varphi_{x_i}(y)) + (-\Delta)^{s/2}(\xi_{x_i}(x)-\xi_{x_i}(y)).
\end{equation}
Using both the bounds in \eqref{EL05A0} we see that
$$\sup_{\Rn}|D^2\varphi|\leq \kappa_1 \varrho^{-1}$$
for some constant $\kappa_1$, independent of $\varrho$. Letting $r=|x-y|$ we compute
\begin{align*}
|(-\Delta)^{s/2}\varphi_{x_i}(x)-(-\Delta)^{s/2}\varphi_{x_i}(y)|
&\leq c_{n, \frac{s}{2}}\left|\int_{|z|\leq r}\frac{\varphi_{x_i}(z+x)
-\varphi_{x_i}(x)-\varphi_{x_i}(z+y)+\varphi_{x_i}(y)}{|z|^{n+s}}\dz \right|
\\
&\qquad + c_{n, \frac{s}{2}}\left|\int_{|z|> r}\frac{\varphi_{x_i}(z+x)
-\varphi_{x_i}(x)-\varphi_{x_i}(z+y)+\varphi_{x_i}(y)}{|z|^{n+s}}\dz \right|
\\
&\lesssim \varrho^{-1} \int_{|z|\leq r} |z|^{1-n-s}\dz
+ \varrho^{-1} |x-y| \int_{|z|>r}|z|^{-n-s}
\\
&\lesssim \varrho^{-1} r^{1-s}
\\
&\leq \kappa_4 r^{\beta'} \varrho^{-s-\beta'}= \kappa_4 r^{\beta'} \varrho^{1-s-\beta}
\end{align*}
for some constant $\kappa_4$. Now consider the last term in \eqref{EL05B}. Note that
by our choice of cut-off function
\begin{align*}
|(-\Delta)^{s/2}(\xi_{x_i}(x)-\xi_{x_i}(y))| &= c_{n, \frac{s}{2}} \left| \int_{|z|\geq \varrho/4} (\xi_{x_i}(x+z)-\xi_{x_i}(y+z))\frac{\dz}{|z|^{n+s}}\right|
\\
&= c_{n, \frac{s}{2}} \left| \int_{|z-x|\geq \varrho/2} \xi_{x_i}(z)\frac{\dz}{|z-x|^{n+s}} - \int_{|z-x|\geq \varrho/2} \xi_{x_i}(z)\frac{\dz}{|z-y|^{n+s}}\right|
\\
&\leq \kappa_5 |x-y| \varrho^{-s-1}\leq  \kappa_6 |x-y|^{\beta'} \varrho^{1-s-\beta}
\end{align*}
for some constants $\kappa_5, \kappa_6$ where the estimate follows applying the same estimate as done before 
(see the estimate for $x, y\in\Omega^c\cap\breve\Omega_\varrho$).
Combining these estimates we have the proof.
\end{proof}

In the remaining part of this section, we set $\Omega$ to be a strictly star-shaped domain.
Recall that
$\Omega$ is said to be strictly star-shaped if there exists a point $z_0\in\Omega$ such that
$$(x-z_0)\cdot\nu(x)>0 \quad \text{for all}\; x\in\partial\Omega.$$

\begin{thm}\label{T-06}
Let $\Omega$ be a bounded, strictly star-shaped  $C^2$ domain. Let $u\in C^2(\Omega)\cap C^{1}(\bar\Omega)\cap C^{0,1}(\Rn)$ satisfy the following
for $x\in\Omega$
\begin{equation}\label{ET06A}
\begin{split}
|D^2 u(x)| &\leq C ({\rm dist}(x, \partial\Omega))^{\gamma-1},
\\
|u(x)|&\leq C\, {\rm dist}(x, \partial\Omega),
\\
|(-\Delta)^s u(x)|& \leq C \left(1+1_{\{s\in[0,\frac{1}{2})\}}+1_{\{s=\frac{1}{2}\}} |\log {\rm dist}(x, \partial\Omega)|+ 
1_{\{s>\frac{1}{2}\}}({\rm dist}(x, \partial\Omega))^{1-2s}\right)
\end{split}
\end{equation}
for some constants $C>0$ and $\gamma\in (0,1)$. 
Then we have
\begin{align}
\int_{\Omega}(x\cdot\grad u)(-\Delta )^s u\dx &= \frac{2s-n}{2}\int_{\Omega}u(-\Delta)^s u \dx= \frac{2s-n}{2}[u]^2_{s},\label{ET06B}
\\
\int_{\Omega}(x\cdot\grad u)(-\Delta u)\dx&= \frac{2-n}{2}[u]^2_1-\frac{1}{2}\int_{\partial\Omega}
\left(\frac{\partial u}{\partial\nu}\right)^2(x\cdot \nu) dS.\label{ET06C}
\end{align}
\end{thm}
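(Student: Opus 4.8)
The plan is to prove the two displayed identities separately; the hypotheses \eqref{ET06A} are precisely what makes the integrations by parts legitimate.

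\emph{The local identity \eqref{ET06C}.} This is the classical Rellich--Pohozaev computation. Since $\gamma>0$, the bound $|D^2u(x)|\le C(\dist(x,\pOm))^{\gamma-1}$ is integrable near $\pOm$, so $-\Delta u\in L^1(\Om)$ and $x\cdot\grad(|\grad u|^2)=2\,x\cdot(D^2u)\grad u\in L^1(\Om)$; together with $\grad u\in C(\bar\Om)$ this makes every integral below absolutely convergent. Carrying the computation out on $\Om_\varepsilon:=\{x\in\Om:\dist(x,\pOm)>\varepsilon\}$ and letting $\varepsilon\downarrow 0$ (the interior integrals converge by dominated convergence, and, since $u\in C^1(\bar\Om)$, the boundary integrals over $\partial\Om_\varepsilon$ converge to integrals over $\pOm$ with outward unit normal $\nu$), one obtains
$$\int_{\Om}\grad u\cdot\grad(x\cdot\grad u)\dx=\int_{\Om}(x\cdot\grad u)(-\Delta u)\dx+\int_{\pOm}(x\cdot\grad u)\,\frac{\partial u}{\partial\nu}\,dS,$$
$$\int_{\Om}x\cdot\grad(|\grad u|^2)\dx=-n\int_{\Om}|\grad u|^2\dx+\int_{\pOm}|\grad u|^2\,(x\cdot\nu)\,dS.$$
Using $\grad u\cdot\grad(x\cdot\grad u)=|\grad u|^2+\tfrac12\,x\cdot\grad(|\grad u|^2)$ together with $\grad u=\bigl(\tfrac{\partial u}{\partial\nu}\bigr)\nu$ on $\pOm$ (because $u=0$ there), hence $x\cdot\grad u=(x\cdot\nu)\tfrac{\partial u}{\partial\nu}$ and $|\grad u|^2=\bigl(\tfrac{\partial u}{\partial\nu}\bigr)^2$ on $\pOm$, one collects terms to get
$$\int_{\Om}(x\cdot\grad u)(-\Delta u)\dx=\frac{2-n}{2}[u]_1^2-\frac12\int_{\pOm}\left(\frac{\partial u}{\partial\nu}\right)^2(x\cdot\nu)\,dS,$$
which is \eqref{ET06C}.

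\emph{The nonlocal identity \eqref{ET06B}.} Here I would follow the dilation argument, using Lemma~\ref{L05} in place of the delicate boundary analysis of \cite{RS14}. Since $u\in C^{0,1}(\Rn)$ vanishes outside the bounded set $\Om$, we have $u\in H^s(\Rn)$, so $\int_\Om u(-\Delta)^s u\dx=[u]_s^2=\int_{\Rn}\mg^2\dx$ with $\mg=(-\Delta)^{s/2}u$; moreover $\mg$ is bounded, $\mg\in C^{1-s}(\Rn)$ by Lemma~\ref{L05}(i), and $\mg(x)=-c_{n,s/2}\int_\Om u(y)|x-y|^{-n-s}\dy$ for $x\notin\bar\Om$, so $\mg$ and $\grad\mg$ decay like $|x|^{-n-s}$ and $|x|^{-n-s-1}$ respectively at infinity. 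For $\lambda$ near $1$ set $u_\lambda(x):=u(\lambda x)$ and $I_\lambda:=\int_{\Rn}u_\lambda\,(-\Delta)^s u\dx$; the identity will follow by computing $\frac{d}{d\lambda}I_\lambda\big|_{\lambda=1}$ in two ways. First, $\frac{u(\lambda x)-u(x)}{\lambda-1}$ is bounded by $\|u\|_{C^{0,1}}|x|$, vanishes once $|x|$ exceeds a fixed radius (then $u(\lambda x)=u(x)=0$ for all $\lambda$ near $1$), and converges a.e.\ to $x\cdot\grad u$; since $(-\Delta)^s u\in L^1_{\loc}(\Rn)$ by \eqref{ET06A}, dominated convergence gives $\frac{d}{d\lambda}I_\lambda\big|_{\lambda=1}=\int_{\Om}(x\cdot\grad u)(-\Delta)^s u\dx$.

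Second, by the Parseval identity (valid since $u_\lambda,u\in H^s(\Rn)$ and $(-\Delta)^s u\in L^1(\Rn)$) and the scaling $(-\Delta)^{s/2}u_\lambda(x)=\lambda^s\mg(\lambda x)$,
$$I_\lambda=\int_{\Rn}(-\Delta)^{s/2}u_\lambda\,(-\Delta)^{s/2}u\dx=\lambda^s\int_{\Rn}\mg(\lambda x)\mg(x)\dx;$$
differentiating the right side at $\lambda=1$ — splitting $\Rn$ into $\breve\Omega_\varrho$, where Lemma~\ref{L05}(ii) furnishes $\mg\in C^1$ with $\|\grad\mg\|_{L^\infty(\breve\Omega_\varrho)}\le C\varrho^{-s}$, and the shell $\{\dist(\cdot,\pOm)<\varrho\}$, of measure $O(\varrho)$, and sending $\varrho\to0$ — yields
$$\frac{d}{d\lambda}I_\lambda\Big|_{\lambda=1}=s\int_{\Rn}\mg^2\dx+\int_{\Rn}(x\cdot\grad\mg)\,\mg\dx.$$
Finally $\int_{\Rn}(x\cdot\grad\mg)\mg\dx=\tfrac12\int_{\Rn}x\cdot\grad(\mg^2)\dx=-\tfrac n2\int_{\Rn}\mg^2\dx$: the integration by parts over $\Rn$ is legitimate because $\mg$ is continuous across $\pOm$, so the boundary contributions from the two sides of $\{\dist(\cdot,\pOm)=\varrho\}$ cancel as $\varrho\to0$, $\grad\mg$ has at worst the integrable singularity $\dist(\cdot,\pOm)^{-s}$ there, and $\mg,\grad\mg$ decay fast at infinity. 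Equating the two expressions for $\frac{d}{d\lambda}I_\lambda\big|_{\lambda=1}$ gives $\int_{\Om}(x\cdot\grad u)(-\Delta)^s u\dx=\bigl(s-\tfrac n2\bigr)\int_{\Rn}\mg^2\dx=\frac{2s-n}{2}[u]_s^2$, which, since $[u]_s^2=\int_\Om u(-\Delta)^s u\dx$, is exactly \eqref{ET06B}.

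The computational core is short; the real obstacle is justifying the two differentiations under the integral sign and the last integration by parts when $s\ge\tfrac12$, where $x\cdot\grad u$ jumps across $\pOm$ and $\grad\mg$ is unbounded near $\pOm$. This is exactly the role of Theorem~\ref{T-03} and Lemma~\ref{L05}; the decisive point — which is why the nonlocal term is handled more cleanly than in \cite{RS14} — is that, $u$ being Lipschitz, $\mg$ stays continuous (indeed $C^{1-s}$) up to and across $\pOm$, so the nonlocal operator contributes no boundary integral in \eqref{ET06B}.
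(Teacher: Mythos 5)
Your proof of \eqref{ET06C} is the same classical Rellich computation the paper uses. For the nonlocal identity \eqref{ET06B} you take a genuinely different route. The paper writes $\int_\Omega u_\lambda(-\Delta)^su\dx=\lambda^{(2s-n)/2}\int_{\Rn}\mg_{\sqrt\lambda}\mg_{1/\sqrt\lambda}\dx$ (step~(B)) and then shows $\tfrac{d}{d\lambda}\big|_{\lambda=1^+}\int_{\Rn}\mg_\lambda\mg_{1/\lambda}\dx=0$ (step~(C)) by passing to polar coordinates, introducing the one-dimensional function $\zeta(t)=t^{(n-1)/2}\chi(t)\mg(tx_0)$ with a cut-off $\chi$, verifying regularity/decay bounds for $\zeta$ via Lemma~\ref{L05}, and invoking the external result \cite[Proposition~1.11]{RS14}. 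You instead differentiate $\lambda^s\int_{\Rn}\mg(\lambda x)\mg(x)\dx$ directly, obtaining $s\int\mg^2+\int(x\cdot\grad\mg)\mg$, and then compute $\int_{\Rn}(x\cdot\grad\mg)\mg\dx=-\tfrac n2\int_{\Rn}\mg^2\dx$ by a Rellich-type integration by parts on $\Rn\setminus\{{\rm dist}(\cdot,\pOm)<\varrho\}$: the two inner boundary contributions cancel as $\varrho\to0$ since $\mg$ is continuous across $\pOm$ (Lemma~\ref{L05}(i)), and the shell contribution vanishes since $|\grad\mg|\lesssim{\rm dist}(\cdot,\pOm)^{-s}$ is integrable (Lemma~\ref{L05}(ii)). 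This is more direct than the paper's path — it avoids the polar reduction, the auxiliary function $\zeta$, and the appeal to Proposition~1.11 of \cite{RS14} — at the price of exploiting Lemma~\ref{L05}(ii) more heavily. The one step you leave genuinely loose is the differentiation under the integral sign: ``split into $\breve\Omega_\varrho$ and a shell of measure $O(\varrho)$ and send $\varrho\to0$'' is the wrong order of limits (for fixed $\varrho$ the shell estimate from the $C^{1-s}$ bound is $O(\varrho\,|\lambda-1|^{-s})$, which does not vanish as $\lambda\to1$). What actually closes it is a pointwise dominated-convergence bound: writing $\delta={\rm dist}(x,\pOm)$, the mean value theorem with $\|\grad\mg\|_{L^\infty(\breve\Omega_{\delta/2})}\lesssim\delta^{-s}$ handles the case $|\lambda-1|\,|x|\le\delta/2$, while the global $C^{1-s}$ bound gives $|x|^{1-s}|\lambda-1|^{-s}\lesssim|x|\,\delta^{-s}$ in the complementary case, so the difference quotient is dominated on bounded sets by $C|x|\,{\rm dist}(x,\pOm)^{-s}\in L^1$, and by the far-field decay of $\mg,\grad\mg$ for large $|x|$. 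With that observation supplied, your argument is complete.
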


As a corollary to Theorem~\ref{T-06} we obtain the Pohozaev identity in a star-shaped domain.
\begin{cor}\label{Cor-7}
Let $\Omega$ be a bounded, strictly star-shaped  $C^2$ domain and $u\in C^2(\Omega)\cap C(\Rn)$ be a solution to
\begin{equation}\label{EC07A}
\cL u = f(u) \quad \text{in}\; \Omega, \quad \text{and}\quad u=0\quad \text{in} \; \Omega^c,
\end{equation}
for some locally Lipschitz function $f$. Then
\begin{equation}\label{EC07B}
s\, a\,[u]^2_s + [u]^2_1 - n \mathcal{E}(u) = 
\frac{1}{2}\int_{\partial \Omega} \left(\frac{\partial u}{\partial \nu}\right)^2 (x\cdot \nu(x)) dS,
\end{equation}
where $f(u)=F'(u)$.
\end{cor}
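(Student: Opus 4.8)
The plan is to derive \eqref{EC07B} from Theorem~\ref{T-06} by testing the equation \eqref{EC07A} against the dilation generator $x\cdot\grad u$. The first task is to check that $u$ satisfies the hypotheses of Theorem~\ref{T-06}. Since $u\in C(\Rn)$ vanishes outside the bounded set $\Omega$, it is bounded; because $f$ is locally Lipschitz, $g:=f(u)$ is then bounded and continuous on $\Omega$, so $u$ is in particular a viscosity solution of $\cL u=g$ and Theorem~\ref{T-BMS} yields $u\in C^{1,\alpha}(\bar\Omega)\cap C^{0,1}(\Rn)$ for some $\alpha\in(0,1)$; consequently $\grad u$ is bounded and, since $u=0$ on $\partial\Omega$, one has $|u(x)|\le C\,\dist(x,\partial\Omega)$. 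Moreover $x\mapsto f(u(x))$ is Lipschitz on $\Omega$ (a locally Lipschitz function composed with the bounded Lipschitz map $u$), so after dividing the equation by $\norm{g}_{C^{0,1}(\Omega)}$ and using the linearity of $\cL$, Theorem~\ref{T-03} provides the bound $|D^2u(x)|\le C\,(\dist(x,\partial\Omega))^{\alpha-1}$ together with the stated control on $|(-\Delta)^su(x)|$. Thus $u$ is admissible in Theorem~\ref{T-06} with $\gamma=\alpha$.

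Next I would multiply \eqref{EC07A} by $x\cdot\grad u$ and integrate over $\Omega$, obtaining
\begin{equation*}
\int_{\Omega}(x\cdot\grad u)(-\Delta u)\dx + a\int_{\Omega}(x\cdot\grad u)(-\Delta)^su\dx = \int_{\Omega}(x\cdot\grad u)\,f(u)\dx .
\end{equation*}
All three integrals converge absolutely: $x\cdot\grad u$ is bounded on $\Omega$, $f(u)$ is bounded, and by the bounds just established $|\Delta u|$ and $|(-\Delta)^su|$ are dominated near $\partial\Omega$ by an integrable power, respectively logarithm, of $\dist(x,\partial\Omega)$. Applying \eqref{ET06B} and \eqref{ET06C} of Theorem~\ref{T-06} turns the left-hand side into an explicit expression:
\begin{equation*}
\frac{2-n}{2}[u]^2_1-\frac12\int_{\partial\Omega}\left(\frac{\partial u}{\partial\nu}\right)^2(x\cdot\nu)\,dS + a\,\frac{2s-n}{2}[u]^2_s = \int_{\Omega}(x\cdot\grad u)\,f(u)\dx .
\end{equation*}

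It remains to evaluate the right-hand side and to reorganize. Normalizing the antiderivative so that $F(0)=0$ — which leaves $f=F'$ unchanged — we have $(x\cdot\grad u)f(u)=x\cdot\grad\bigl(F(u)\bigr)$ on $\Omega$, and since $F(u)\in C^1(\bar\Omega)$ equals $F(0)=0$ on $\partial\Omega$ while $\Div x=n$, integration by parts gives $\int_\Omega(x\cdot\grad u)f(u)\dx=-n\int_\Omega F(u)\dx$. Substituting this and using the elementary identity
\begin{equation*}
\frac{2-n}{2}[u]^2_1 + a\,\frac{2s-n}{2}[u]^2_s + n\int_\Omega F(u)\dx = [u]^2_1 + s\,a\,[u]^2_s - n\,\mathcal{E}(u),
\end{equation*}
which is nothing more than the definition of $\mathcal{E}(u)$, yields \eqref{EC07B}. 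The argument is a matter of assembling facts already in hand; the only places needing a little care are the verification in the first step that $f(u)$ is regular enough (Lipschitz on $\Omega$) to feed into Theorems~\ref{T-03} and~\ref{T-06}, and the check that every integral in the testing step is absolutely convergent so that integrating the pointwise equation is legitimate. I do not expect a genuinely hard step, since the substance is contained in Theorems~\ref{T-BMS}, \ref{T-03} and~\ref{T-06}.
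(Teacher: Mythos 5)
Your proposal is correct and follows essentially the same route as the paper: invoke Theorems~\ref{T-BMS} and~\ref{T-03} to verify that $u$ satisfies the hypotheses~\eqref{ET06A} of Theorem~\ref{T-06}, then multiply~\eqref{EC07A} by $x\cdot\grad u$, integrate over $\Omega$, apply~\eqref{ET06B}--\eqref{ET06C}, and use $\int_\Omega (x\cdot\grad u)f(u)\dx=-n\int_\Omega F(u)\dx$ together with the definition of $\mathcal{E}(u)$. Your write-up is somewhat more explicit than the paper's (spelling out the regularity bootstrap from $u\in C(\Rn)$ bounded to $f(u)\in C^{0,1}(\Omega)$ and checking absolute convergence of the three integrals), but there is no substantive difference in method.
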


\begin{proof}
In view of Theorem~\ref{T-BMS} and ~\ref{T-03}, we see that \eqref{ET06A} holds. Thus the conclusion of Theorem~\ref{T-06} holds.
Hence \eqref{EC07B} follows by multiplying \eqref{EC07A} with $(x\cdot\grad u)$, integrating over $\Omega$ and applying the following relation.
$$\int_\Omega (x\cdot\grad u(x)) f(u)\dx= \int_\Omega x\cdot\grad  F(u)\dx=-n\int_\Omega F(u) \dx.$$
\end{proof}

Now we focus on the proof of Theorem~\ref{T-06}.
We follow the ideas of \cite{RS14} to prove Theorem~\ref{T-06}.
Without loss of generality, we assume that $\Omega$ is strictly star-shaped with respect to $z_0=0$. In what follows, we use the following notation: for a function $g:\Rn\to\R$
and a scalar $\lambda>0$, we define $g_\lambda(x)=g(\lambda x)$. The proof of \eqref{ET06B} is divided into following steps.
\begin{enumerate}
\item[\hypertarget{A}{(A)}] We show that
$$\int_{\Omega}|(-\Delta)^s u(x)|\dx<\infty\quad \text{and}\quad 
\int_\Omega (x\cdot \grad u(x))(-\Delta)^s u(x)=\frac{d}{d\lambda}\Big|_{\lambda=1+}\int_\Omega u_\lambda(x)(-\Delta)^s u(x)\dx.$$

\item[\hypertarget{B}{(B)}] We next show that for $\lambda>0$
$$ \int_\Omega u_\lambda(-\Delta)^s u(x) dx=\lambda^{\frac{2s-n}{2}}\int_{\Rn}\mg_{\sqrt{\lambda}}(x)\mg_{\frac{1}{\sqrt{\lambda}}}(x)\dx,$$
  where $\mg(x):=(-\Delta)^{\frac{s}{2}}u(x)$.
  
\item[\hypertarget{C}{(C)}] In the third step, we show that
$$\frac{d}{d\lambda}\bigg|_{\lambda=1+}\int_{\Rn}\mg_{\lambda}(x)\mg_{\frac{1}{\lambda}}(x) \dx=0.$$
\end{enumerate}

Let us first complete the proof of \eqref{ET06B} assuming the above steps.
\begin{proof}[Proof of \eqref{ET06B}]
Since
$$\frac{d}{d\lambda}\bigg|_{\lambda=1+}\int_{\Rn}\mg_{\sqrt{\lambda}}(x)\mg_{\frac{1}{\sqrt{\lambda}}}(x)\dx=\frac{1}{2}\frac{d}{d\lambda}\bigg|_{\lambda=1+}\int_{\Rn}\mg_{\lambda}(x)\mg_{\frac{1}{\lambda}}(x)\dx,$$
from the product rule of the derivative and steps \hyperlink{A}{(A)}-\hyperlink{C}{(C)}, it follows that
$$\int_\Omega (x\cdot \grad u(x))(-\Delta)^s u(x)= \frac{2s-n}{2} \int_{\Rn}  (-\Delta)^{\frac{s}{2}}u(x) (-\Delta)^{\frac{s}{2}}u(x)\dx=\frac{2s-n}{2}\int_\Omega u 
(-\Delta)^{s}u(x)\dx,$$
where the last inequality follows from \hyperlink{B}{(B)} taking $\lambda=1$. Using the dominated convergence theorem, we also see that
\begin{align*}
\int_\Omega u (-\Delta)^{s}u(x)\dx &= \int_{\Rn} u (-\Delta)^{s}u(x)\dx
\\
&=\lim_{\varepsilon\to 0} c_{n, s}\int_{\Rn} u(x) \dx \int_{|y-x|\geq \varepsilon} \frac{u(x)-u(y)}{|y-x|^{n+2s}}\dy
\\
&=\lim_{\varepsilon\to 0} \frac{c_{n, s}}{2}\int_{\Rn}  \int_{|y-x|\geq \varepsilon} \frac{(u(y)-u(x))^2}{|y-x|^{n+2s}}\dy\dx
\\
&=[u]^2_{s}.
\end{align*}
This proves \eqref{ET06B}.
\end{proof}

Now we prove \hyperlink{A}{(A)}-\hyperlink{C}{(C)}. In fact, the proofs of \hyperlink{A}{(A)}-\hyperlink{B}{(B)} are somewhat similar to the one
appearing in \cite{RS14} apart from the fact that $|(-\Delta)^su|$ is not in $L^\infty(\Omega)$ for us. This is compensated by the decay of
$u$ at the boundary. The proof of \hyperlink{C}{(C)} differs from \cite{RS14}. Since $u(x)$ behaves like ${\rm dist}(x, \partial\Omega)$ near the boundary, we do not get any contribution of boundary integration from the nonlocal part. Lemma~\ref{L05} plays a key role in proving \hyperlink{C}{(C)}.

\begin{proof}[Proof of \hyperlink{A}{(A)}]
From the bound of $|(-\Delta)^su|$ in \eqref{ET06A} and co-area formula it follows that
$$\int_{\Omega}|(-\Delta)^s u(x)|\dx<\infty.$$

Define $g=(-\Delta)^s u$. Since $u\in L^\infty (\Rn)$, $u_\lambda g\in L^{1}(\Omega)$ for all $\lambda>0$.
Thus,  making the change of variables $y=\lambda x$ and using that ${\rm supp} (u_\lambda)=\frac{1}{\lambda}\Omega\subset\Omega$ for $\lambda>1$, we obtain
\begin{align*}
\frac{d}{d\lambda}\bigg|_{\lambda=1+} \int_{\Omega} u_\lambda(x)g(x)\dx&= \lim_{\lambda\downarrow 1}\int_{\Omega}\frac{u(\lambda x)-u(x)}{\lambda-1}g(x)\dx
\\
&=\lim_{\lambda\downarrow 1}\lambda^{-n}\int_{\lambda\Omega}\frac{u(y)-u({y}/{\lambda})}{\lambda-1}g({y}/{\lambda})\dy
\\
&=\lim_{\lambda\downarrow 1}\int_{\Omega}\frac{u(y)-u({y}/{\lambda})}{\lambda-1}g({y}/{\lambda})\dy + 
\lim_{\lambda\downarrow 1}\int_{\lambda\Omega\setminus\Omega}\frac{-u({y}/{\lambda})}{\lambda-1}g({y}/{\lambda})\dy.
\end{align*}
Since $u\in C^1(\bar\Omega)$, using dominated convergence theorem together with the
bound of $g$ in \eqref{ET06A}, it is easily seen that
 $$\lim_{\lambda\downarrow 1}\int_{\Omega}\frac{u(y)-u({y}/{\lambda})}{\lambda-1}g({y}/{\lambda})dy=\int_{\Omega}(y\cdot\grad u)g(y)\dy.$$
 On the other hand, the bounds of $u$ and $g$ in \eqref{ET06A} confirms that
 $$|u(x)g(x)|\to 0 \quad \text{uniformly, as}\; x\; \text{approaches to}\; \partial\Omega.$$
 Therefore,
 $$\lim_{\lambda\downarrow 1}\left|\int_{\lambda\Omega\setminus\Omega}\frac{-u({y}/{\lambda})}{\lambda-1}g\big({y}/{\lambda}\big)\dy\right|
 \leq \frac{1}{\lambda-1}\|u_{\frac{1}{\lambda}}g_{\frac{1}{\lambda}}\|_{L^\infty(\lambda\Omega\setminus\Omega)}|\lambda\Omega-\Omega|\to 0,$$
 as $|\lambda\Omega-\Omega|\leq C(\lambda-1)|\Omega|$. This proves \hyperlink{A}{(A)}.
\end{proof}

Next, we prove \hyperlink{B}{(B)}.

\begin{proof}[Proof of \hyperlink{B}{(B)}]
Since $u\in C^{0,1}(\Rn)$ and $u\equiv 0$ in $\Omega^c$, $u\in H^s(\Rn)$. Therefore,
using integration by parts formula
 \begin{align*}
 \int_\Omega u_\lambda(x)(-\Delta)^s u(x) \dx=\int_{\Rn}u_\lambda(x)(-\Delta)^su(x) \dx
 &=\int_{\Rn}(-\Delta)^{\frac{s}{2}} u_\lambda (-\Delta)^{\frac{s}{2}}u\dx
 \\
 &= \lambda^s \int_{\Rn}(-\Delta)^{\frac{s}{2}}u(\lambda x)(-\Delta)^{\frac{s}{2}}u(x)\dx
 \\
 &=\int_{\Rn} \mg_\lambda(x) \mg(x) \dx,
 \end{align*}
  where $\mg(x):=(-\Delta)^{\frac{s}{2}}u(x)$.
 Next, using the change of variable $y=\sqrt{\lambda}x$, we have
 $$ \int_\Omega u_\lambda(-\Delta)^s u\dx=\lambda^\frac{2s-n}{2}\int_{\Rn}\mg_{\sqrt{\lambda}}\mg_{\frac{1}{\sqrt{\lambda}}}\dx.$$
 Hence \hyperlink{B}{(B)} is proved. 
\end{proof}

Let us now prove \hyperlink{C}{(C)}.
\begin{proof}[Proof of \hyperlink{C}{(C)}]
Writing the integration in polar coordinate we note that 
$$\int_{\Rn}\mg_{\lambda}(x)\mg_{\frac{1}{\lambda}}(x) \dx=
\int_{\partial\Omega} (x\cdot\nu(x)) dS\int_0^\infty t^{n-1} \mg(\lambda t x) \mg(tx/\lambda) \dt.$$
Consider a smooth cut-off function $\chi[0, \infty)\to [0, 1]$ satisfying
$$\chi(t)=0\quad \text{in}\; [0, \frac{1}{4}], \quad \chi(t)=1\quad \text{in}\; [\frac{1}{2},\infty).$$
Fix $x_0\in \partial \Omega$ and define
$$\zeta(t)=t^{\frac{n-1}{2}}\chi(t)(-\Delta)^{\frac{s}{2}} u(tx_0)\quad t>0.$$
This cut-off function will remove the singularity of the map $t\mapsto t^{\frac{n-1}{2}}$ at $0$.
We claim that for some constant $C_0$, not depending on $x_0$, we have
\begin{itemize}
\item[(i)] $\norm{\zeta}_{C^{\upkappa_1}[0, \infty)}\leq C_0$ where $\upkappa_1= (1-s)\wedge s$,
\item[(ii)] For any $\beta\in [\upkappa_1, 1+\upkappa_1]$ we have
$$\norm{\zeta}_{C^\beta((0,1-\varrho)\cup(1+\varrho, 2))}\leq C_0 \varrho^{-\beta}\quad \text{for}\; \varrho\in (0,1),$$
\item[(iii)] $|h'(t)|\leq C_0 t^{-2-\upkappa_1}$ and $|h^{\prime\prime}(t)|\leq C_0 t^{-3-\upkappa_1}$ for all $t\geq 2$.
\end{itemize}
Once this claim is established, we can apply \cite[Proposition~1.11]{RS14} to see that
$$\frac{d}{d\lambda}\bigg|_{\lambda=1+} \int_0^\infty \zeta(\lambda t)\zeta(t/\lambda)=0.$$
Furthermore, this limit of derivative is uniform for all $x_0\in\partial\Omega$. Hence 
\begin{align*}
&\frac{d}{d\lambda}\bigg|_{\lambda=1+}\int_{\Rn}\mg_{\lambda}(x)\mg_{\frac{1}{\lambda}}(x) \dx
\\
&=\frac{d}{d\lambda}\bigg|_{\lambda=1+}\int_{\partial\Omega} (x\cdot\nu(x)) dS\int_0^\frac{3}{4} t^{n-1}
[ (1-\chi(\lambda t ))\mg(\lambda t x) (1-\chi (t /\lambda))\mg(tx/\lambda)
\\
&\quad + (1-\chi(\lambda t ))\mg(\lambda t x)\chi(t/\lambda)\mg(tx/\lambda) + \chi(\lambda t)\mg(\lambda tx) (1-\chi( t /\lambda))\mg( t x/\lambda)] \dt.
\end{align*}
Note that for $x\in\partial\Omega$ and $t\leq 3/4$, ${\rm dist}(tx, \partial\Omega)\geq \frac{1}{4}$. Therefore, by Lemma~\ref{L05}(ii),
$\lambda\mapsto \mg(\lambda t x)$ is differentiable at $1$, giving us
 \begin{align*}
&\frac{d}{d\lambda}\bigg|_{\lambda=1+}\int_{\Rn}\mg_{\lambda}(x)\mg_{\frac{1}{\lambda}}(x) \dx
\\
&=\int_{\partial\Omega} (x\cdot\nu(x)) dS\int_0^\frac{3}{4} t^{n-1}
\frac{d}{d\lambda}\bigg|_{\lambda=1}[ (1-\chi(\lambda t ))\mg(\lambda t x) (1-\chi (t /\lambda))\mg(tx/\lambda)
\\
&\quad + (1-\chi(\lambda t ))\mg(\lambda t x)\chi(t/\lambda)\mg(tx/\lambda) + \chi(\lambda t)\mg(\lambda tx) (1-\chi( t /\lambda))\mg( t x/\lambda)] \dt=0,
\end{align*}
proving \hyperlink{C}{(C)}. Thus to complete the proof we need to establish the claim (i)-(iii).

We start with (iii). Note that for $t\geq 2$, $x_0t\in (2\Omega)^c$. Thus
$$(-\Delta)^{\nicefrac{s}{2}}u(tx_0)=c_{n,\frac{s}{2}}\int_{\Omega} \frac{-u(y)}{|tx_0-y|^{n+s}}\dy.$$
Since $\Omega$ is strictly star-shaped, there exists $\kappa>0$, independent of $x$, satisfying $|x-y|\geq \kappa$ for 
all $y\in\Omega$ and $x\in (2\Omega)^c$. Thus, for some constant $C$ we have
$$|(-\Delta)^{\nicefrac{s}{2}}u(x)|\leq C |x|^{-n-s},\quad
|\partial_{x_i}  (-\Delta)^{\nicefrac{s}{2}}u(x)|\leq C |x|^{-n-s-1},\quad |\partial^2_{x_ix_j}  (-\Delta)^{\nicefrac{s}{2}}u(x)|\leq C |x|^{-n-s-2},$$
for all $x\in (2\Omega)^c$. Thus for any $\gamma\leq s$ we have
$$|\zeta'(t)|\leq C_0 t^{\frac{n-1}{2}-n-s-1}\leq C_0 t^{-2-\gamma}\quad \text{and}\quad 
|\zeta^{\prime\prime}(t)|\leq  C_0 t^{-3-\gamma},$$
for all $t\geq 2$. This proves (iii).

Now we prove (i). From the above estimate we know that $\zeta$ is a bounded function, uniformly in $x_0\in \partial\Omega$.
Furthermore, $\zeta$ is Lipschitz in $[2, \infty)$ from the above estimate.
Recall the function $\mg$ from Lemma~\ref{L05}. Consider $t_1, t_2\in [0, 2]$. If $|t_1-t_2|\geq 1$, then
$$\frac{|\zeta(t_1)-\zeta(t_2)|}{|t_1-t_2|^{\upkappa_1}}\leq 2\norm{\zeta}_\infty.$$
Now suppose $|t_1-t_2|< 1$. Since $t\mapsto t^{\frac{n-1}{2}}\chi(t)$ is Lipschitz in $[0, 2]$, we have
\begin{align*}
\frac{|\zeta(t_1)-\zeta(t_2)|}{|t_1-t_2|^{\upkappa_1}}\leq |t_1^{\frac{n-1}{2}}|\frac{|\mg(t_1x_0)-\mg(t_2x_0)|}{|t_1-t_2|^{\upkappa_1}}
+|\mg(t_2)| |\frac{|\chi(t_1)t^{\frac{n-1}{2}}_1-\chi(t_2)t^{\frac{n-1}{2}}_2|}{|t_1-t_2|^{\upkappa_1}}\leq C_0
\end{align*}
by Lemma~\ref{L05}(i). This proves (i). Similarly, (ii) follows from Lemma~\ref{L05}(ii).
\end{proof}

Now we complete the proof of Theorem~\ref{T-06}.
\begin{proof}[Proof of Theorem~\ref{T-06}]
Since we already proved \eqref{ET06B} above, we need to consider \eqref{ET06C}. 
Since $u\in C^2(\Omega)$, we write the following identity (which does not require $\Omega$ to be star-shaped abound $0$)
$$\Delta u (x\cdot \grad u)= {\rm div} (\grad u (x\cdot \grad u)) - |\grad u|^2 - x\cdot\grad (\frac{1}{2}|\grad u|^2)\quad
\text{in}\; \Omega.$$
Since $u\in C^1(\bar\Omega)$ and $u=0$ on $\partial\Omega$, we have $\grad u(x)=\pm|\grad u(x)|\nu(x)$ on $\partial\Omega$.
Therefore, integrating the above identity over $\Omega$ and applying integration by parts we obtain
\begin{align*}
-\int_\Omega \Delta u (x\cdot \grad u) \dx &= -\int_{\partial\Omega} (\grad u\cdot\nu(x))(x\cdot \grad u)dS
+ \int_\Omega |\grad u|^2 \dx
\\
&\qquad  - \frac{n}{2} \int_\Omega |\grad u|^2\dx + \int_{\partial\Omega} (\nu(x)\cdot x) (\frac{1}{2}|\grad u|^2) dS
\\
&= \frac{2-n}{2} \int_\Omega |\grad u|^2\dx -\frac{1}{2}\int_{\partial\Omega} \left(\frac{\partial u}{\partial\nu}\right)^2(x\cdot \nu) dS.
\end{align*}
This proves \eqref{ET06C}, completing the proof of Theorem~\ref{T-06}.
\end{proof}


\section{Proof of Theorems~\ref{T-main},~\ref{T-09} and Corollaries~\ref{C1.3},~\ref{C1.4},~\ref{C1.5}}\label{S-proof}
First, we complete the proof of Theorem~\ref{T-main} following the ideas of \cite[Proposition~1.6]{RS14}. 
\begin{proof}[Proof of Theorem~\ref{T-main}]
First consider balls $B_1, \ldots, B_m$ of radius $r$ covering $\bar\Omega$ so that the following holds: If $B_i\cap B_j\neq \emptyset$ for $i$ and $j$, then there exists a ball $B$ and a point $z_0\in \Omega\cap B$ such that
$$ (x-z_0)\cdot\nu(x)>0\quad \text{for all}\; x\in\partial\Omega\cap B.$$
Let $\{\psi_k\, :\, k=1, \ldots, m\}$ be a partition of unity with respect to the balls $B_1, \ldots, B_m$ satisfying ${\rm supp}(\psi_k)\subset B_k$.
Define $u_k=\psi_k u$. 
We have two possibilities:

\begin{itemize}
\item[(i)] $ B_i\cap {B}_j=\emptyset$. In this case
$$-\int_\Omega (x\cdot \grad u_i)\Delta u_j=0=
\frac{2-n}{2}\int_{\Omega} \grad u_i\cdot \grad u_j \dx-\frac{1}{2}\int_{\partial\Omega}
 \frac{\partial u_i}{\partial\nu} \frac{\partial u_j}{\partial\nu} (x\cdot \nu) dS.$$
Again, \cite[Lemma~5.1]{RS14} gives
\begin{align*}
&\int_{{B}_i} (x\cdot \grad u_i) (-\Delta)^s u_j\dx  +\int_{{B}_j} (x\cdot \grad u_j) (-\Delta)^s u_i\dx
\\
&\quad = \frac{2s-n}{2} \int_{{B}_i} u_i (-\Delta)^s u_j \dx + \frac{2s-n}{2} \int_{{B}_i} u_j (-\Delta)^s u_i \dx.
\end{align*}

\item[(ii)] $B_i\cap {B}_j\neq\emptyset$. Consider the ball $B$ and point $z_0$ as mentioned above. Pick a $C^2$ domain $\tilde\Omega$ satisfying
$$\{u\neq 0\}\subset \tilde\Omega \subset \Omega\cap B\quad
\text{and}\quad (x-z_0)\cdot\nu(x)>0 \quad \text{for all}\; x\in\tilde\Omega.$$
Applying Theorem~\ref{T-03} it is easily seen that $u_i, u_j$ satisfy \eqref{ET06A} with respect to the domain $\tilde\Omega$.
Thus applying Theorem~\ref{T-06} to the function $(\chi_i+\chi_j)u$ and $(\chi_i-\chi_j)u$, and then subtracting the corresponding identities (see \eqref{ET06B}-\eqref{ET06C}) we obtain
\begin{align*} 
&\int_{\Omega}(x\cdot\grad u_i)(-\Delta)^s u_j\dx + \int_{\Omega}(x\cdot\grad u_j)(-\Delta)^s u_i\dx 
\\
&\qquad= \frac{2s-n}{2}\int_{\Omega}u_i(-\Delta)^s u_j \dx + \frac{2s-n}{2}\int_{\Omega}u_j(-\Delta)^s u_i \dx,
\end{align*}
and
\begin{align*}
\int_{\Omega}(x\cdot\grad u_i)(-\Delta u_j)\dx + \int_{\Omega}(x\cdot\grad u_j)(-\Delta u_i)\dx&= (2-n)\int_{\Omega}\grad u_i \grad u_jdx
\\
 &-\int_{\partial\Omega\cap B}\left(\frac{\partial u_i}{\partial\nu} \frac{\partial u_j}{\partial\nu}\right)(x\cdot \nu) dS.
\end{align*}
\end{itemize}
Combining the above results we obtain the identities \eqref{ET06B} and \eqref{ET06C}. Hence proof of Theorem~\ref{T-main} follows (see the proof of Corollary~\ref{Cor-7}).
\end{proof}

Now we come to the proof of Theorem~\ref{T-09}.
\begin{proof}[Proof of Theorem~\ref{T-09}]
Multiply the first equation by $(x\cdot\grad u)$ and the second equation by $(x\cdot\grad v)$, integrate over $\Omega$, and then summing
them we obtain \eqref{ET-09A} with the help of following identity
$$\int_\Omega (x\cdot \grad u) F_u + (x\cdot \grad v) F_v \dx= \int_\Omega x\cdot \grad F(u,v) \dx=-n\int_\Omega F(u, v)\dx.$$
\end{proof}

Now we focus on the proofs of Corollaries~\ref{C1.3}, ~\ref{C1.4} and ~\ref{C1.5}.
\begin{proof}[Proof of Corollary~\ref{C1.3}]
From the identity \eqref{ET-01A} we obtain
\begin{equation}\label{ET1.1A}
a s [\phi]^2_{s} + [\phi]^2_1 - n \left(\frac{1}{2}[\phi]^2_{1}+\frac{a}{2}[\phi]^2_{s}-\int_\Omega F(\phi)\right) =0.
\end{equation}
Note that $F(\phi)=\frac{\lambda}{2}\phi^2$. Moreover, multiplying the equation by $\phi$ and using integration by parts we also have
$$[\phi]^2_{1} + [\phi]^2_{s}=-\lambda \int_\Omega \phi^2 \dx.$$
Plug it in \eqref{ET1.1A} gives 
$$a s[\phi]^2_{s} + [\phi]^2_1=0,$$
giving us $\phi\equiv 0$ in $\Rn$.
\end{proof}

\begin{proof}[Proof of Corollary~\ref{C1.4}]
Translating the domain we may assume $\Omega$ to be star-shaped with respect to $0$.
We rewrite \eqref{ET-09A} as follows
\begin{align}\label{ET1.2B}
&\frac{2-n}{2}\left([u]^2_1 + a_1 [u]^2_{{s_1}} \right) + \frac{2-n}{2}\left( [v]^2_1  + a_2 [v]_{{s_2}} \right) + n\int_\Omega F(u, v)\dx \nonumber
\\
&\quad = \int_{\partial \Omega} \left[\left(\frac{\partial u}{\partial \nu}\right)^2 + \left(\frac{\partial v}{\partial \nu}\right)^2\right](x\cdot \nu(x)) dS + a_1(1-s_1)[u]_{s_1} + a_2(1-s_2)[v]_{s_2}.
\end{align}
Multiplying the first equation in \eqref{ET1.2A0} by $u$ and integrating over $\Omega$ we obtain
$$[u]_1^2 + a_1 [u]^2_{s_1}= \int_{\Omega} (\lambda_1 |u|^{p} + \delta \alpha |u|^{\alpha} |v|^\beta)\dx.$$
Similarly, the second equation gives
$$[v]_1^2 + a_2 [v]^2_{s_2}= \int_{\Omega} (\lambda_2 |v|^{q} + \delta \beta |u|^{\alpha} |v|^\beta)\dx.$$
Putting them in \eqref{ET1.2B} and using the fact
$$F(u, v)=\lambda_1\frac{1}{p} |u|^p + \lambda_2\frac{1}{q}|v|^q + \delta |u|^{\alpha}|v|^\beta,$$
we get
\begin{align}\label{ET1.2C}
&\lambda_1\left(\frac{n}{p}- \frac{n-2}{2}\right) \int_\Omega |u|^p + \lambda_2 \left(\frac{n}{q}- \frac{n-2}{2}\right) \int_\Omega |v|^q + 
\delta(n -\frac{n-2}{2}(\alpha+\beta))\int_\Omega |u|^\alpha |v|^\beta\nonumber
\\
&\quad = \int_{\partial \Omega} \left[\left(\frac{\partial u}{\partial \nu}\right)^2 + \left(\frac{\partial v}{\partial \nu}\right)^2\right](x\cdot \nu(x)) dS + a_1(1-s_1)[u]_{s_1} + a_2(1-s_2)[v]_{s_2}.
\end{align}
From \eqref{ET1.2A} we see that the left hand side of \eqref{ET1.2C} is non-positive whereas the right hand side is non-negative. Hence both the sides has to be $0$, that is,
\begin{align*}
\lambda_1\left(\frac{n}{p}- \frac{n-2}{2}\right) \int_\Omega |u|^p + \lambda_2 \left(\frac{n}{q}- \frac{n-2}{2}\right) \int_\Omega |v|^q + 
\delta(n -\frac{n-2}{2}(\alpha+\beta))\int_\Omega |u|^\alpha |v|^\beta &= 0,
\\
\int_{\partial \Omega} \left[\left(\frac{\partial u}{\partial \nu}\right)^2 + \left(\frac{\partial v}{\partial \nu}\right)^2\right](x\cdot \nu(x)) dS + a_1(1-s_1)[u]_{s_1} + a_2(1-s_2)[v]_{s_2}&=0.
\end{align*}
Now if the last two inequalities of \eqref{ET1.2A} are strict, the first equation above gives $u\equiv 0\equiv v$ in $\Rn$. Furthermore, if $u>0$ in $\Omega$, 
by Hopf's lemma (cf. \cite[Theorem~2.2]{BM24}) we know that $\frac{\partial u}{\partial\nu}<0$ on $\partial\Omega$. This contradicts the second equation above. The same contradiction holds if
$v>0$ in $\Omega$. This completes the proof.
\end{proof}

\begin{proof}[Proof of Corollary~\ref{C1.5}]
As before, we assume $\Omega$ to be star-shaped with respect to $0$. Rewrite \eqref{ET-01A} as follows
$$\frac{2-n}{2}\left([u]^2_1 + a [u]^2_{s}\right) + n\int_{\Omega} F(u) \dx
= \frac{1}{2}\int_{\partial \Omega} \left(\frac{\partial u}{\partial \nu}\right)^2 (x\cdot \nu(x)) dS + a(1-s)[u]^2_{s}.$$
Observing $F(u)=\frac{1}{2^*}|u|^{2^*}+\frac{\lambda}{2}u^2$ and using the equation we obtain
\begin{equation*}
\lambda\int_\Omega u^2\dx = \frac{1}{2}\int_{\partial \Omega} \left(\frac{\partial u}{\partial \nu}\right)^2 (x\cdot \nu(x)) dS + a(1-s)[u]^2_{\dot{H}^s}.
\end{equation*}
Using \eqref{E1.7} this gives
$$\frac{1}{2}\int_{\partial \Omega} \left(\frac{\partial u}{\partial \nu}\right)^2 (x\cdot \nu(x)) dS + (a(1-s)\lambda_{1,s}-\lambda)\int_\Omega u^2\dx\leq 0.
$$
This is clearly a contradiction if $\lambda<a(1-s)\lambda_{1,s}$, unless $u\equiv 0$. Again, when $\lambda=a(1-s)\lambda_{1,s}$, 
the above inequality implies $\frac{\partial u}{\partial \nu}=0$ on $\partial\Omega$ whenever $(x\cdot \nu(x))>0$. But this is not
possible if $u>0$ in $\Omega$, due to Hopf's lemma. This completes the proof.
\end{proof}



\begin{thebibliography}{77}
\bibitem{GCAG} Gurdev Chand Anthal and Prashanta Garain.
Pohozaev-type identities for classes of quasilinear elliptic local and
nonlocal equations and systems, with applications. ArXiv: 2506.08667, 2025

\bibitem{AC21} N. Abatangelo and M. Cozzi. An elliptic boundary value problem with fractional nonlinearity, \emph{SIAM J. Math. Anal.} 53(3), 3577-3601, 2021

 \bibitem{BDVV} S. Biagi,  S. Dipierro, E. Valdinoci, and  E. Vecchi.
 A Faber-Krahn inequality for mixed local and nonlocal operators,   \emph{J. Anal. Math.} 150, 405--448, 2023
 
\bibitem{BDVV2} S.  Biagi, S.  Dipierro, E.  Valdinoci, and  E. Vecchi.
Semilinear elliptic equations involving mixed local and nonlocal operators,
\textit{Proc. Roy. Soc. Edinburgh Sect. A, }151(5):1611--1641, 2021

\bibitem{BDVV3} S. Biagi,  S. Dipierro, E. Valdinoci, and  E. Vecchi.
 Mixed local and nonlocal elliptic operators: regularity and
maximum principles, \emph{Comm. Partial Differential Equations} 47, no. 3, 585--629, 2022

\bibitem{BDVV4} S. Biagi,  S. Dipierro, E. Valdinoci, and  E. Vecchi.
A Brezis-Nirenberg type result for mixed local and nonlocal operators,
\emph{ Nonlinear Differential Equations and Applications NoDEA } 32, no. 62, 2025

 \bibitem{BMS22} A. Biswas, M.  Modasiya, and A. Sen.
 Boundary regularity of mixed local-nonlocal operators and its applications,  \emph{Annali di Matematica Pura ed Applicata}, 202, 679--710, 2023 
 
 \bibitem{BM24} A. Biswas and M. Modasiya,
 Mixed local-nonlocal operators: maximum principles, eigenvalue problems and their applications,
 \emph{Journal d'Analysis Math.}, {\it to appear}, 2024
 
 
 \bibitem{BM03} Y. Bozhkov and E. Mitidieri.
 Existence of multiple solutions for quasilinear systems via fibering method,
 \emph{J. Diff. Equ.} 190(1), 239--267, 2003
 


\bibitem{DM24} C. De Filippis and G. Mingione.
Gradient regularity in mixed local and nonlocal problems, \emph{Math. Ann.} 388, 261--328, 2024

\bibitem{GK22} P. Garain and J. Kinnunen.
On the regularity theory for mixed local and nonlocal quasilinear
elliptic equations, \emph{Trans. Amer. Math. Soc.} 375, no. 8, 5393--5423, 2022

\bibitem{GilTru} D. Gilbarg and N. S. Trudinger.
Elliptic partial differential equations of second order, 
\emph{Second, Grundlehren der
Mathematischen Wissenschaften}, vol. 224, Springer-Verlag, Berlin, 1983.

  
\bibitem{MS23} M. Modasiya and A. Sen.
Fine boundary regularity for fully nonlinear mixed local-nonlocal problems,
\emph{arXiv: 2301.02397} , 2023
 
 \bibitem{MZ21} C. Mou and Y. Zhang.
 Regularity theory for second order integro-PDEs,
 \emph{Potential Anal.} 54, no.2, 387--407, 2021
 
 \bibitem{Poh} S. I. Pohozaev.
 On the eigenfunctions of the equation $\Delta u + \lambda f (u) = 0$, Dokl. Akad. Nauk
SSSR 165 , 1408--1411, 1965

\bibitem{Poh70} S.I. Pohozaev. On eigenfunctions of quasilinear elliptic problems, \emph{Mat. Sb.} 82,  192?212, 1970

\bibitem{PS86} P. Pucci and J. Serrin.
A general variational inequality, \emph{ndiana Univ. Math. J.} 35 , 681-703, 1986

\bibitem{RS14a}
X. Ros-Oton and J. Serra.
The Dirichlet problem for the fractional Laplacian: regularity up to the
boundary. 
\emph{J. Math. Pures Appl.} 101, 275--302, 2014

\bibitem{RS14} X.  Ros-Oton and J. Serra.
The Pohozaev identity for the fractional Laplacian, 
\emph{ Arch. Ration. Mech. Anal.} 213 , no. 2, 587--628, 2014
 
 \bibitem{RS15} X.  Ros-Oton and J. Serra.
 Nonexistence results for nonlocal equations with critical and supercritical nonlinearities,
 \emph{Comm. Partial Differential Equations} 40, 115--133, 2015
 
 \bibitem{RS15a} X.  Ros-Oton and J. Serra.
 Local integration by parts and Pohozaev identities for higher order fractional Laplacians,
 \emph{Discrete Contin. Dyn. Syst.} 35, no.5, 2131--2150, 2015
 
 \bibitem{RSV} X.  Ros-Oton and J. Serra and E. Valdinoci,
 Pohozaev identities for anisotropic integro-differential operators,
 \emph{Comm. Partial Differential Equations} 42, 1290--1321, 2017
 
\bibitem{Uh76} K. Uhlenbeck. Generic properties of eigenfunctions, Amer. J. Math. 98, 1059--1078, 1976

\end{thebibliography}

\end{document}